\documentclass[11pt, reqno]{amsart}

\usepackage[utf8]{inputenc}
\usepackage[letterpaper, margin = 0.9in]{geometry}

\usepackage{graphicx}
\graphicspath{ {Figures/} }
\usepackage{subcaption}
\usepackage[bitstream-charter]{mathdesign}

\usepackage{mathtools, amssymb, bm, amsthm}

\newtheorem{lemma}{Lemma}
\newtheorem{proposition}{Proposition}
\newtheorem{corollary}{Corollary}

\theoremstyle{definition}
\newtheorem{problem}{Problem}
\newtheorem{definition}{Definition}
\newtheorem{remark}{Remark}

\usepackage{array}
\usepackage{enumerate}
\usepackage{enumitem}
\usepackage{cite}
\usepackage{xcolor}
\usepackage{multirow}
\usepackage{booktabs}
\usepackage{amsaddr}

\usepackage{hyperref}
\hypersetup{colorlinks = true, linkcolor = blue, citecolor = blue, urlcolor = blue}

\allowdisplaybreaks

\DeclareMathOperator*{\T}{\mathsf{T}}
\DeclareMathOperator*{\argmin}{arg\,min}
\DeclareMathOperator*{\sign}{sign}
\DeclareMathOperator*{\diag}{diag}
\DeclareMathOperator*{\U}{\mathcal{U}}
\DeclareMathOperator*{\Uc}{\mathcal{U}_c}
\DeclareMathOperator*{\Uuc}{\mathcal{U}_{uc}}

\renewcommand{\qedsymbol}{$\blacksquare$}

\title[Approximate Energetic Resilience of Nonlinear Systems under Partial Loss of Control Authority]{Approximate Energetic Resilience of Nonlinear Systems under\\Partial Loss of Control Authority}
\author{Ram Padmanabhan and Melkior Ornik}
\address{University of Illinois Urbana-Champaign, Urbana, IL 61801, USA}
\thanks{This work was supported by Air Force Office of Scientific Research grant FA9550-23-1-0131 and NASA University Leadership Initiative grant 80NSSC22M0070. (Corresponding Author: Ram Padmanabhan. Email: {\tt ramp3@illinois.edu.})}

\begin{document}

\begin{abstract}
In this paper, we quantify the resilience of nonlinear dynamical systems by studying the increased energy used by all inputs of a system that suffers a partial loss of control authority, either through actuator malfunctions or through adversarial attacks. To quantify the maximal increase in energy, we introduce the notion of an energetic resilience metric. {Prior work in this particular setting does not consider general nonlinear dynamical systems. In developing this framework, we first consider the special case of linear driftless systems and recall the energies in the control signal in the nominal and malfunctioning systems. Using these energies, we derive a bound on the energetic resilience metric. For general nonlinear systems, we first obtain a condition} on the mean value of the control signal in both the nominal and malfunctioning systems, which allows us to approximate the energy in the control. We then obtain a worst-case approximation of this energy for the malfunctioning system, over all malfunctioning inputs. {Assuming this approximation is exact,} we derive bounds on the energetic resilience metric when control authority is lost over one actuator. A set of simulation examples demonstrate that the metric is useful in quantifying the resilience of the system without significant conservatism, despite the approximations used in obtaining control energies for nonlinear systems.
\end{abstract}

\maketitle 

\vspace{-0.5cm}

\section{Introduction} \label{sec:Introduction}
In the process of control system design, ensuring resilience to system faults or adversarial attacks is a key issue. Control systems operate in environments with undesirable or uncontrolled influences, which often present themselves as changes to the structure of actuated inputs to the system. For instance, the disastrous crash of the F-117A Nighthawk aircraft in 1997 resulted from missing fasteners between the elevon hydraulic actuator and the wing \cite{F117A}, contributing to uncontrolled deflections of the wing. More recently, a software fault affected the docking of the Nauka module to the International Space Station (ISS) in 2021. An unexpected firing of the module's thrusters led to the tilting of the ISS by up to $540^{\circ}$ for over $45$ minutes. This was described as a ``loss of attitude control'' by the National Aeronautics and Space Administration (NASA) \cite{Nauka}. To counteract this uncontrolled misfiring, other thrusters on the ISS were fired to return the ISS to its nominal position. While this issue was successfully mitigated, {the mitigating action could have resulted in increased resource consumption, such as time spent and fuel consumed. Another example of an undesirable influence was discovered in a 2024 study by Lemon \cite{AL1, AL2} in the context of traffic light control systems suffering from potential adversarial attacks.} Due to a lack of authentication software, it was found that malicious actors could gain control over the light timings in a network of traffic lights. Light timings are among the primary inputs in a large-scale traffic control system, and manipulating them can clearly lead to significant, potentially disastrous consequences. 

Changes to the input structure of a control system, and in particular actuator faults, also account for $44\%$ of all faults in spacecraft attitude control systems \cite{RS03, HLXZ21}. Building redundancy in actuators \cite{SP90, Grossman95} or using control reconfiguration schemes \cite{Grossman95, BFP19, HLXZ21} are common strategies to address this issue, but can lead to significant costs in control system design and operation. In particular, redundant actuators may suffer from failure of individual components \cite{Steffen08} or unintended effects such as counteracting other actuators \cite{Liu12}. Notably, these faults can prevent a control system from achieving a performance objective such as reachability, safety or optimality with respect to a performance criterion. Moreover, mitigating such issues with classical fault-tolerant, robust or adaptive control strategies is not guaranteed to succeed. Fault-tolerant control only addresses actuator failures limited to reduced effectiveness while still maintaining controllability \cite{XHS13, AH19}, or actuators ``locked'' into producing constant inputs \cite{TCJ02}. Considering uncontrolled inputs as external disturbances within the paradigm of robust control theory may fail as shown in \cite{BO22b}, as robust control requires disturbances to have much smaller magnitudes than the controlled inputs \cite{WZ01}, which is not guaranteed. Fast changes in uncontrolled inputs within a short period of time also lead to the failure of adaptive control techniques \cite{WZ01, AD08}.

Motivated by the difficulties in  addressing such failure modes, in this paper we consider nonlinear dynamical systems that suffer from a \emph{loss of control authority} over a subset of their actuators. Such a failure separates the inputs into \emph{controlled} and \emph{uncontrolled} sets. Uncontrolled inputs can potentially be chosen by an adversary and take on any values in their admissible set. However, these values are measurable, and controlled inputs, which are under the authority of the system, can use these measurements to compensate for any undesirable effects from the uncontrolled inputs. In a general nonlinear system, addressing the problem of designing controlled inputs in this setting is difficult. Rather than designing these controlled inputs, our objective is to quantify the additional energy in the control inputs used to compensate for the effect of uncontrolled inputs, compared to the \emph{nominal} case where control authority is not lost. We propose the metric of \emph{energetic resilience} to quantify this additional energy. We remark that the maximal additional energy is directly related to consumption of resources such as fuel in practical vehicular systems that may lose authority over some inputs. In such settings, fuel economy is an important consideration \cite{Elias22, MS16}. Thus, such a quantification of the increased resource consumption can enable an understanding of the maximal capacity required.

Our line of work contrasts with earlier approaches to quantify resilience in systems and networks. Bouvier and Ornik \cite{BO20} derived conditions for reachability of a target set under any uncontrolled inputs for linear systems, termed \emph{resilient reachability}. In a subsequent extension \cite{BO22b}, a control law that enables a linear system to achieve a target despite an actuator malfunction was designed using the above conditions. In \cite{BXO21}, a \emph{quantitative resilience} metric was introduced by Bouvier \emph{et al.} for linear driftless systems. This metric was defined as the maximal ratio of minimal reach times to achieve a target state between the nominal and malfunctioning systems. Such a metric quantifies the maximal additional time taken by the malfunctioning system, compared to a nominal system, in achieving its objective. This metric was further used to quantify resilience in generalized integrators in \cite{BXO23}, general linear systems in \cite{BO22d, BO23} and cyber-physical networks in \cite{BNO24}. 

In contrast to the above works, the energetic resilience metric we use considers minimal control energies to achieve a target rather than minimal reach times as with quantitative resilience. This metric was first introduced by Padmanabhan \emph{et al.} \cite{PBDO24} to quantify the maximal \emph{cost of disturbance} for a linear system affected by bounded disturbances. {We note that a metric for energetic resilience was proposed in \cite{PO25a} for linear driftless systems. However, that metric is not meaningful for general nonlinear systems, motivating the approach in this paper. We also remark that none of the above works consider a loss of control authority for general nonlinear dynamical systems, and this paper presents our first steps in answering this problem.}

{Recent advances have been made in addressing the general challenge of resilience for nonlinear systems. De Persis and Tesi \cite{PT14} and Zhang \emph{et al.} \cite{ZRD23} consider Denial-of-Service attacks on control and cyber-physical systems with nonlinear dynamics. Along similar lines, Pang \emph{et al.} \cite{PXG20} discuss resilient control for nonlinear cyber-physical systems subjected to network attacks. Zhao \emph{et al.} \cite{ZZ24} and Jia \emph{et al.} \cite{JFXL25} also address the issues of deception and Denial-of-Service attacks over cyber-physical networks. In the problem of resilience for nonlinear systems subjected to external disturbances, Saoud \emph{et al.} \cite{SJS24} use temporal logic specifications to introduce and quantify a metric for resilience. In a similar setting, Zhou \emph{et al.} \cite{ZWW24} use robust passivity theory to design resilient controllers for nonlinear systems with bounded disturbances.} {However, to the best of our knowledge, none of these recent advances address actuator malfunctions or adversaries gaining control over a set of inputs. Our objective is to address resilience in such nonlinear dynamical systems with a set of malfunctioning inputs.}

{The contributions of this paper are organized as follows.
\begin{itemize}[leftmargin = 15pt, topsep = 0pt]
	\item In Section \ref{sec:Formulation}, we describe the structure of the nonlinear system we analyze, as well as providing the definition of key quantities derived throughout this paper. In particular, we define a resilience metric which aims to quantify the additional energy required to compensate the effect of malfunctioning inputs. 
	\item In Section \ref{sec:Res_Dft}, we consider the special case of linear driftless systems and recall some relevant results from \cite{PO25a}. Using these results, we briefly discuss a problem of designing the optimal final time to minimize the energy required to achieve a task under a malfunction. 
	\item Further, in Section \ref{sec:Res_Dft} we consider the case when control authority is lost over one actuator, deriving the maximal total energy used by the linear driftless system in the malfunctioning case. This is used to derive a bound on the resilience metric.
	\item From Section \ref{sec:Energies}, we address general nonlinear dynamical systems. We first obtain a condition on the mean value of the control signal that achieves a specific task in both the nominal and malfunctioning cases. This condition is used to approximate the minimum control energies to achieve the task in both cases, obtaining an approximate upper bound in the malfunctioning case.
	\item In Section \ref{sec:Energies}, we also analyze the special case when authority is lost over one actuator, obtaining the approximate malfunctioning energy in this particular setting.
	\item In Section \ref{sec:Resilience}, under this special case, we obtain a bound on the resilience metric which subsequently bounds from above the maximal additional energy required to achieve a task in the malfunctioning case, compared to the nominal case. 
	\item We present a set of simulation examples on linear driftless, linear and nonlinear dynamical systems in Section \ref{sec:Examples}. Here, we show that the bounds on the resilience metric derived in Sections \ref{sec:Res_Dft} and \ref{sec:Resilience} are useful in characterizing the worst-case control energy required under a variety of malfunctioning inputs. In particular, this bound is not significantly conservative for nonlinear systems despite the approximations used in Section \ref{sec:Energies} to obtain nominal and malfunctioning energies.
\end{itemize}
}

\subsection{Notation and Facts}
The set $\mathbb{R}^+ \coloneqq [0, \infty)$ is the set of all non-negative real numbers. For scalars $z \in \mathbb{R}$, define the $\sign(\cdot)$ function as $\sign(z) = z/|z| \in \{-1, +1\}$ if $z \neq 0$, with $\sign(0) = 0$. For vectors $z \in \mathbb{R}^n$, the $\sign(\cdot)$ function operates elementwise on $z$. The $p$-norm of a vector $x \in \mathbb{R}^n$ is defined as $\|x\|_p \coloneqq \left(\sum_{i=1}^{n}|x_i|^p\right)^{1/p}$, with $\|x\|_{\infty} \coloneqq \max_i |x_i|$. For a matrix $L \in \mathbb{R}^{p\times q}$ with entries indexed $l_{ij}$, define the induced matrix norms $\|L\|_1 \coloneqq \max_j \sum_{i=1}^{p} |l_{ij}|$ and $\|L\|_{\infty} \coloneqq \max_i \sum_{j=1}^{q} |l_{ij}|$. The Moore-Penrose inverse \cite{P55}, also called the pseudoinverse of $L$, is denoted $L^{\dagger}$. For a piecewise continuous function $u:[0, t_f]\to \mathbb{R}^p$, its $\mathcal{L}_2$ norm is defined as $\|u\|_{\mathcal{L}_2} \coloneqq \sqrt{\int_{t=0}^{t_f}\|u(t)\|_{2}^{2}~\mathrm{d}t}$, and represents the \emph{energy} in the function over the interval $[0, t_f]$. The mean value of such a function is defined as $\overline{u} \coloneqq \frac{1}{t_f} \int_{0}^{t_f} u(t)\mathrm{d}t$.

For two vectors or matrices $M$ and $N$ such that their product $MN$ can be defined, the sub-multiplicative property of norms \cite{Horn} states that $\|MN\| \leq \|M\|\|N\|$, where $\|\cdot\|$ is any vector or induced matrix norm. For any two vectors $x \in \mathbb{R}^n$ and $y \in \mathbb{R}^n$, the Cauchy-Schwarz inequality \cite{Horn} can be written as $|x^Ty| \leq \|x\|_2\|y\|_2$. The minimum and maximum eigenvalues of a symmetric matrix $P \in \mathbb{R}^{n\times n}$ are denoted $\lambda_{\min}(P)$ and $\lambda_{\max}(P)$. For such a matrix, the Rayleigh inequality \cite{Horn} $\lambda_{\min}(P)\|x\|_{2}^{2} \leq x^TPx \leq \lambda_{\max}(P)\|x\|_{2}^{2}$ holds for any vector $x \in \mathbb{R}^n$. For a convex function $\varphi:\mathbb{R}^p\to\mathbb{R}$ and piecewise continuous function $u:[0, t_f]\to \mathbb{R}^p$, Jensen's inequality \cite[Chapter 1]{NPBook} states that $\varphi\left(\frac{1}{t_f}\int_{0}^{t_f}u(t)\mathrm{d}t\right) \leq \frac{1}{t_f}\int_{0}^{t_f}\varphi(u(t))\mathrm{d}t$. Given a differential inequality $\dot{q}(t) \leq a(t)q(t)$ where $q$ and $a$ are continuous functions on $t \in [0, t_f]$, Gr\"onwall's inequality \cite[Theorem 1.2.1]{BGP} states that $q(t) \leq q(0)\mathrm{exp}\left(\int_{0}^{t}a(t)\mathrm{d}t\right)$ for all $t \in [0, t_f]$.

\section{Problem Formulation} \label{sec:Formulation}
In this article, we consider continuous-time, time-invariant nonlinear dynamical systems of the form
\begin{equation} \label{eq:Nominal}
	\dot{x}(t) = f(x(t)) + g(x(t))u(t), ~x(0) = x_0 \neq 0,
\end{equation}
where the state $x(t) \in \mathbb{R}^n$ and the control $u(t) \in \mathbb{R}^{m+p}$ for all $t$, $f:\mathbb{R}^n \to \mathbb{R}^n$ and $g: \mathbb{R}^n \to \mathbb{R}^{n\times (m+p)}$. We assume the functions $f$ and $g$ satisfy the following conditions:
\begin{alignat}{2}
f(x_1) = f(x_2) &+ d_f(x_1, x_2) &&\text{ where } \|d_f(x_1, x_2)\|_{\infty} \leq D_f \|x_1-x_2\|_{\infty}, \label{eq:f_cond} \\
\text{ and } g(x_1) = g(x_2) &+ d_g(x_1, x_2) &&\text{ where } \|d_g(x_1, x_2)\|_{\infty} \leq D_g \|x_1 - x_2\|_{\infty}. \label{eq:g_cond}
\end{alignat}
In other words, $f$ and $g$ are $D_f$ and $D_g$-Lipschitz continuous in the $\infty$-norm respectively. Further, the control $u$ lies in the admissible set $\U$, where
\begin{equation} \label{eq:SetU}
\U \coloneqq \left\{u: \mathbb{R}^+ \to \mathbb{R}^{m+p}: u \text{ is piecewise continuous in $t$, } \|u(t)\|_{\infty} \leq 1 \text{ for all $t$}\right\}
\end{equation}
in line with previous works \cite{BO23, PO25a}. {The condition $\|u(t)\|_{\infty} \leq 1$ represents actuation limits in any practical control system.} Conditions \eqref{eq:f_cond} and \eqref{eq:g_cond}, and the admissible set $\U$ in \eqref{eq:SetU} guarantee that a unique solution $x(t)$ to the system \eqref{eq:Nominal} exists \cite{Khalil}.

Consider a malfunction that causes the system \eqref{eq:Nominal} to lose authority over $p$ of its $m+p$ actuators. Then, the control $u$ and the function $g$ can be split into controlled and uncontrolled components as
\begin{equation} \label{eq:Malfunctioning}
	\dot{x}(t) = f(x(t)) + g_{c}(x(t))u_c(t) + g_{uc}(x(t))u_{uc}(t),
\end{equation}
where the subscripts $c$ and $uc$ denote \emph{controlled} and \emph{uncontrolled} respectively. Here, the functions $g_c:\mathbb{R}^n \to \mathbb{R}^{n\times m}$ and $g_{uc}:\mathbb{R}^n \to \mathbb{R}^{n\times p}$ are such that $g(\cdot) = \left[g_c(\cdot), ~g_{uc}(\cdot)\right]$ satisfies \eqref{eq:g_cond}. Based on the set of admissible controls $u \in \U$ with $u(t) = \left[u_{c}^{\T}(t),~ u_{uc}^{\T}(t)\right]^{\T}$, the controlled and uncontrolled inputs satisfy $u_c \in \Uc$ and $u_{uc} \in \Uuc$, where
\begin{align}
\Uc &\coloneqq \left\{u_c: \mathbb{R}^+ \to \mathbb{R}^{m}: u_c \text{ is piecewise continuous in $t$, } \|u_c(t)\|_{\infty} \leq 1 ~\text{for all $t$}\right\}, \label{eq:SetUc} \\
\Uuc &\coloneqq \left\{u_{uc}: \mathbb{R}^+ \to \mathbb{R}^{p}: u_{uc} \text{ is piecewise continuous in $t$, } \|u_{uc}(t)\|_{\infty} \leq 1 ~\text{for all $t$}\right\}. \label{eq:SetUuc}
\end{align}

In this setting, we consider the task of \emph{fixed-time reachability}, i.e., achieving a specified target state $x(t_f) = x_{tg} \in \mathbb{R}^n$ from an initial state $x(0) = x_0 \in \mathbb{R}^n$ at a specified final time $t_f$. Under the nominal dynamics \eqref{eq:Nominal}, the system uses the input $u \in \U$, while under the malfunctioning dynamics \eqref{eq:Malfunctioning}, the system must use only the controlled input $u_c \in \Uc$ to achieve this task. We note that in the latter case, the uncontrolled input $u_{uc}$ is arbitrarily chosen --- potentially by an adversary --- in the space $\Uuc$. However, we assume this choice of $u_{uc}$ is observable, potentially \emph{a priori}, and hence its effect can be compensated for in the design of $u_c$. Achieving this task despite the effect of $u_{uc}$ corresponds to the notion of \emph{resilient reachability}, formally defined below.

\begin{definition}
A target $x_{tg} \in \mathbb{R}^n$ is \emph{resiliently reachable} at a given final time $t_f$ from $x_0 \in \mathbb{R}^n$ if for all uncontrolled inputs $u_{uc} \in \Uuc$ there exists a controlled input $u_c \in \Uc$, possibly dependent on $u_{uc}$, such that $x(t_f) = x_{tg}$.
\end{definition}

Then, our first objective can be stated as follows.

\begin{problem} \label{prob:char}
Characterize the control input $u \in \U$ in \eqref{eq:Nominal} (resp. $u_c \in \Uc$ in \eqref{eq:Malfunctioning}) that ensures a target $x_{tg} \in \mathbb{R}^n$ is reachable (resp. resiliently reachable).
\end{problem}

In Section \ref{sec:Energies}, we use the solution of the nominal \eqref{eq:Nominal} and malfunctioning \eqref{eq:Malfunctioning} systems to obtain a condition on the controls $u \in \U$ and $u_c \in \Uc$ that achieve fixed-time reachability. Further, in both the nominal and malfunctioning cases, we are interested in control signals that use minimum energy to achieve $x(t_f) = x_{tg}$. In such a context, we make the following definitions, also used in \cite{PO25a}.

\begin{definition}[Nominal Energy]
The \emph{nominal energy} is the minimum energy in the control input $u \in \U$ such that a target $x_{tg}$ is reached from $x_0$ in a given time $t_f$ in the system \eqref{eq:Nominal}:
\begin{equation} \label{eq:EN_def}
E_{N}^{*}(x_0, x_{tg}, t_f) \coloneqq \inf_{u \in \U} \left\{\|u\|_{\mathcal{L}_2}^{2} \text{ s.t. } x(t_f) = x_{tg} \text{ in } \eqref{eq:Nominal}\right\}.
\end{equation}
\end{definition}

\begin{definition}[Malfunctioning Energy]
The \emph{malfunctioning energy} is the minimum energy in the controlled input $u_c \in \Uc$ such that a target $x_{tg}$ is reached from $x_0$ in a given time $t_f$ and for a given uncontrolled input $u_{uc} \in \Uuc$, in the system \eqref{eq:Malfunctioning}:
\begin{equation} \label{eq:EM_def}
E_{M}^{*}(x_0,x_{tg},t_f,u_{uc}) \coloneqq \inf_{u_c(u_{uc}) \in \Uc} \left\{\|u_{c}(u_{uc})\|_{\mathcal{L}_2}^{2} \text{ s.t. } x(t_f) = x_{tg} \text{ in } \eqref{eq:Malfunctioning} \text{ for the given $u_{uc}$}\right\},
\end{equation}
where the dependence of $u_c$ on $u_{uc}$ is explicitly denoted with a slight abuse of notation.
\end{definition}

In \eqref{eq:EM_def}, the controlled input $u_c$ may depend not only on past and current values of $u_{uc}$, but potentially also future values of this uncontrolled input. The dependence on future values considers any prior knowledge we may have about uncontrolled inputs, such as the corresponding actuators locking in place or acting with reduced effectiveness \cite{AH19, TCJ02}. In both \eqref{eq:EN_def} and \eqref{eq:EM_def}, deriving the corresponding optimal control signals is not tractable \cite{Khalil} due to the nonlinear nature of the systems \eqref{eq:Nominal} and \eqref{eq:Malfunctioning}. However, our objective in this paper is not to design these optimal control signals, but rather to quantify how much larger the malfunctioning energy may be, compared to the nominal energy. 

The controlled input $u_c$ seeks to compensate for the effect of $u_{uc}$ and ensure that $x_{tg}$ is reachable from $x_0$ in time $t_f$, while also using minimal energy \eqref{eq:EM_def}. On the other hand, the uncontrolled input $u_{uc}$ also contributes to the energy used by the actuators of the system. In a system \eqref{eq:Malfunctioning} that has lost authority over some of its actuators, it is important to characterize the maximal additional energy that may be used by all actuators. In practical cases, this maximal additional energy is directly related to consumption of resources such as fuel in vehicular systems. Quantifying such an increase in resource consumption is essential to understand the maximum capacity of resources required. In an adversarial setting, the uncontrolled input $u_{uc}$ may be chosen to maximize the total energy used by the system. We thus use the following definitions, first introduced in \cite{PO25a}, that consider the total energy used by all actuators of the system \eqref{eq:Malfunctioning}.

\begin{definition}[Total Energy]
The \emph{total energy} is the energy used by all actuators of the system \eqref{eq:Malfunctioning} for a given uncontrolled input $u_{uc}$, assuming the optimal controlled input $u_c(u_{uc})$ in \eqref{eq:EM_def} is used:
\begin{equation} \label{eq:ET_def}
E_{T}(x_0, x_{tg}, t_f, u_{uc}) \coloneqq E_{M}^{*}(x_0, x_{tg}, t_f, u_{uc}) + \|u_{uc}\|_{\mathcal{L}_2}^{2}. 
\end{equation}
\end{definition}

\begin{definition}[Worst-case Total Energy]
The \emph{worst-case total energy} is the maximal value of the total energy \eqref{eq:ET_def} over all possible uncontrolled inputs $u_{uc}$:
\begin{equation} \label{eq:ET_bar_def}
\overline{E}_T(x_0, x_{tg}, t_f) \coloneqq \sup_{u_{uc} \in \Uuc} E_{T}(x_0, x_{tg}, t_f, u_{uc}) = \sup_{u_{uc} \in \Uuc} \left\{E_{M}^{*}(x_0, x_{tg}, t_f, u_{uc}) + \|u_{uc}\|_{\mathcal{L}_2}^{2}\right\}.
\end{equation}
The worst-case total energy quantifies the maximal effect of $u_{uc}$ on the total energy used by all actuators of the system \eqref{eq:Malfunctioning}.
\end{definition}

Finally, note that the worst-case total energy \eqref{eq:ET_bar_def} can be considerably larger than the nominal energy \eqref{eq:EN_def} due to the action of the uncontrolled input $u_{uc}$. Quantifying this increased energy is an important problem to understand additional consumption of resources in practical settings. We thus arrive at our second objective{, that of quantifying this additional energy for general nonlinear systems.}

\begin{problem} \label{prob:resilience}
Quantify the maximal increase in energy used by the malfunctioning system \eqref{eq:Malfunctioning}, in comparison to the nominal system \eqref{eq:Nominal}.
\end{problem}

To quantify such an increase in energy, we define the following \emph{energetic resilience} metric, similar to that defined in \cite{PBDO24}.

\begin{definition}[Energetic Resilience]
The \emph{energetic resilience} of system \eqref{eq:Malfunctioning} is defined for an initial condition $x_0$ no farther than a distance of $R$ from the target state $x_{tg}$, as
\begin{equation} \label{eq:rA_def}
r_A(t_f, R) \coloneqq \sup_{\|x_0-x_{tg}\|_2 \leq R} \overline{E}_T(x_0,x_{tg},t_f) - E_{N}^{*}(x_0, x_{tg}, t_f).
\end{equation}
\end{definition}

The significance of this metric is evident. Based on the definition \eqref{eq:rA_def}, the malfunctioning system \eqref{eq:Malfunctioning} will use \emph{at most} $r_A(t_f,R)$ more energy than the nominal system \eqref{eq:Nominal} to achieve fixed-time reachability, for a given final time $t_f$ and $x_0$ no farther than $R$ from $x_{tg}$. The constraint $\|x_0-x_{tg}\|_2 \leq R$ is required to ensure that the value of $r_A(t_f, R)$ is finite.

{We note that an alternative metric for energetic resilience was considered in \cite{PO25a}. However, the treatment in \cite{PO25a} considered only linear driftless systems, and the corresponding metric is not meaningful for general nonlinear systems due to differences in system responses as discussed in Section \ref{sec:Energies}. In contrast, the metric \eqref{eq:rA_def} we consider in this paper can be extended to nonlinear systems. In the next section, we also begin by considering linear driftless systems and deriving a bound on the metric \eqref{eq:rA_def}. The results in this section are used as a basis for addressing resilience of general nonlinear systems in Section \ref{sec:Energies} and beyond.}

\section{Resilience of Linear Driftless Systems} \label{sec:Res_Dft}
{In this section, we consider the special case of linear driftless systems, i.e., $f(x(t)) \equiv 0$ and $g(x(t)) \equiv B$ for all $x(t)$ in \eqref{eq:Nominal}. We first  review the expressions for the nominal \eqref{eq:EN_def}, malfunctioning \eqref{eq:EM_def} and worst-case total \eqref{eq:ET_bar_def} energies for linear driftless systems derived in \cite{PO25a}. Based on these, we briefly consider the problem of designing the optimal final time to minimize malfunctioning energy. We then address the central problem of bounding the metric \eqref{eq:rA_def}, in the case of losing authority over one actuator.}
In the case of linear driftless systems, we have $f(x(t)) \equiv 0$ and $g(x(t)) \equiv B$ so that $D_f = D_g = 0$ and the system \eqref{eq:Nominal} reduces to
\begin{equation} \label{eq:Nominal_Dft}
	\dot{x}(t) = Bu(t), ~x(0) = x_0.
\end{equation}
Thus, under a malfunction of the form described in Section \ref{sec:Formulation}, the malfunctioning system \eqref{eq:Malfunctioning} is written as
\begin{equation} \label{eq:Malfunctioning_Dft}
	\dot{x}(t) = B_{c}u_{c}(t) + B_{uc}u_{uc}(t).
\end{equation}
Under the assumption that the nominal dynamics \eqref{eq:Nominal_Dft} are controllable, it was shown that a control input $u^*$ achieves the task if and only if
\begin{equation} \label{eq:uN}
	u^*(t) = \overline{u}^{LS} = -\frac{1}{t_f}B^{\dagger}\tilde{x} ~\text{ for all }~ t \in [0, t_f],
\end{equation}
where $\overline{u}^{LS}$ denotes the mean value of the control with the minimum energy. Note that this control is constant in the interval $[0, t_f]$. Further, the constraint $u \in \U$, i.e., $\|u(t)\|_{\infty} \leq 1$ is satisfied if and only if
\begin{equation} \label{eq:Nominal_Cond}
	t_f \geq \|B^{\dagger}\tilde{x}\|_{\infty}.
\end{equation}
Finally, the nominal energy was written as
\begin{equation} \label{eq:EN_Dft}
	E_{N}^{*}(x_0, x_{tg}, t_f) = \int_{0}^{t_f} u^{*T}(t)u^*(t)\mathrm{d}t = \frac{1}{t_f} \|B^{\dagger}\tilde{x}\|_{2}^{2}.
\end{equation}
Using similar arguments, the malfunctioning energy \eqref{eq:EM_def} was derived in \cite{PO25a} as
\begin{equation} \label{eq:EM_Dft}
	E_{M}^{*}(x_0, x_{tg}, t_f, u_{uc}) = \frac{1}{t_f} \left\|B_{c}^{\dagger}\left(\tilde{x} + t_fB_{uc}\overline{u}_{uc}\right)\right\|_{2}^{2}
\end{equation}
under conditions similar to \eqref{eq:Nominal_Cond} on the final time $t_f$. From this, the worst-case total energy $\overline{E}_T(x_0, x_{tg}, t_f)$ was shown to be bounded from above as follows \cite[Proposition 1]{PO25a}:
\begin{align}
\overline{E}_T(x_0, x_{tg}, t_f) &\leq \frac{1}{t_f}\left\|B_{c}^{\dagger}x_0\right\|_{2}^{2} + t_f\left(\sum_{i = 1}^{p} \lambda_i \|V\|_{1}^{2} + p\right) + 2\|B_{uc}^{T}B_{c}^{\dagger T}B_{c}^{\dagger}x_0\|_1. \label{eq:ETbar_Dft}
\end{align}
In the particular case of losing control authority over one actuator, i.e., $p = 1$, it was also shown that the following closed-form expression can be obtained for the worst-case total energy:
\begin{align}
\overline{E}_T(x_0, x_{tg}, t_f) &= \frac{1}{t_f}\left\|B_{c}^{\dagger}x_0\right\|_{2}^{2} + t_f\left(\|B_{uc}\|_{2}^{2} + 1\right) + 2\left|B_{uc}^{T}B_{c}^{\dagger T}B_{c}^{\dagger}x_0\right|. \label{eq:ETbar_Dft_1Act}
\end{align}

\subsection{Final Time Design} \label{eq:FinalTime}
In the context of the malfunctioning energy \eqref{eq:EM_Dft}, it is useful to investigate the optimal final time $t_{f}^{*}$ that minimizes this energy for a given uncontrolled input $u_{uc}$. This is useful in designing control signals for tasks with a variable completion time and constraints on the total available control energy. In other words, we wish to solve the following problem:
\begin{align} \label{eq:EM_tf}
t_{f}^{*} &= \argmin_{t_f} E_{M}^{*}(x_0, x_{tg}, t_f, u_{uc}) \nonumber \\
&= \argmin_{t_f} \frac{1}{t_f} \left\|B_{c}^{\dagger}\left(x_0 + t_fB_{uc}\overline{u}_{uc}\right)\right\|_{2}^{2}.
\end{align}
Using the first-order stationary condition $\frac{\partial}{\partial t_f} \left(E_{M}^{*}(x_0, x_{tg}, t_f, u_{uc})\right) = 0$, we have
$$
t_f \frac{\partial}{\partial t_f}\left\|B_{c}^{\dagger}\left(x_0 + t_fB_{uc}\overline{u}_{uc}\right)\right\|_{2}^{2} = \left\|B_{c}^{\dagger}\left(x_0 + t_fB_{uc}\overline{u}_{uc}\right)\right\|_{2}^{2},
$$%
with $t_f \neq 0$. {The above equality simplifies to the following quadratic equation in $t_f$:}
$$
t_{f}^{2}\overline{u}_{uc}^{T}B_{uc}^{T}B_{c}^{\dagger T}B_{c}^{\dagger}B_{uc}\overline{u}_{uc} = x_{0}^{T}B_{c}^{\dagger T}B_{c}^{\dagger}x_0.
$$%
Solving for $t_f > 0$, we have:
\begin{equation} \label{eq:tf_min}
	t_{f}^{*} = \frac{\|B_{c}^{\dagger}x_0\|_2}{\|B_{c}^{\dagger}B_{uc}\overline{u}_{uc}\|_2}.
\end{equation}
It is then straightforward to show that 
\begin{align*}
&\frac{\partial^2}{\partial t_{f}^2} \left(E_{M}^{*}(x_0, x_{tg}, t_f, u_{uc})\right) = \frac{2\|B_{c}^{\dagger}x_0\|_{2}^{2}}{t_{f}^{3}} ~\text{ and thus}~ \nonumber \\
&\frac{\partial^2}{\partial t_{f}^2} \left(E_{M}^{*}(x_0, x_{tg}, t_f, u_{uc})\right) \Bigg|_{t_f = t_{f}^{*}} = \frac{2\|B_{c}^{\dagger}B_{uc}\overline{u}_{uc}\|_{2}^{3}}{\|B_{c}^{\dagger}x_0\|_2} > 0.
\end{align*}
Hence $t_{f}^{*}$ in \eqref{eq:tf_min} is the unique minimum for the problem in \eqref{eq:EM_tf}. The result \eqref{eq:tf_min} thus provides the final time that minimizes the malfunctioning energy for a given uncontrolled input $\overline{u}_{uc}$.

\subsection{Resilience of Linear Driftless Systems to the Loss of One Actuator} \label{sec:1Act_Dft}
We now return to the expressions \eqref{eq:EN_Dft}-\eqref{eq:ETbar_Dft_1Act} and derive a bound on the energetic resilience metric \eqref{eq:rA_def} under loss of control authority over one actuator. We note that a bound on $r_A(t_f, R)$ can be derived when control authority is lost over $p > 1$ actuators using \eqref{eq:ETbar_Dft}. However, this bound will be more conservative as \eqref{eq:ETbar_Dft} is only an upper bound an not an exact expression as in \eqref{eq:ETbar_Dft_1Act}.

\begin{proposition} \label{prop:rA_Dft}
For the case when $p = 1$, the additive energetic resilience metric $r_A(t_f, R)$ is bounded from above as follows:
\begin{align}
r_A(t_f, R) &\leq \frac{R^2}{t_f}\lambda_{\max}\left(B_{c}^{\dagger T}B_{c}^{\dagger} - B^{\dagger T}B^{\dagger}\right) + 2R\|B_{uc}^{T}B_{c}^{\dagger T}B_{c}^{\dagger}\|_2 + t_f\left(\|B_{uc}\|_{2}^{2} + 1\right). \label{eq:rA_bound}
\end{align}
\end{proposition}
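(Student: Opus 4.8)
The plan is to substitute the closed-form expressions already available for the case $p = 1$ directly into the definition \eqref{eq:rA_def} and then carry out the supremum term by term. Writing $\tilde{x} = x_0 - x_{tg}$, so that the constraint $\|x_0 - x_{tg}\|_2 \le R$ becomes $\|\tilde{x}\|_2 \le R$ and both energies depend on the pair $(x_0, x_{tg})$ only through $\tilde{x}$, inserting the exact worst-case total energy \eqref{eq:ETbar_Dft_1Act} together with the nominal energy \eqref{eq:EN_Dft} reduces the claim to estimating
\begin{equation*}
r_A(t_f, R) = \sup_{\|\tilde{x}\|_2 \le R}\left[\frac{1}{t_f}\,\tilde{x}^{T}\!\left(B_{c}^{\dagger T}B_{c}^{\dagger} - B^{\dagger T}B^{\dagger}\right)\tilde{x} + 2\left|B_{uc}^{T}B_{c}^{\dagger T}B_{c}^{\dagger}\tilde{x}\right| + t_f\!\left(\|B_{uc}\|_{2}^{2} + 1\right)\right],
\end{equation*}
where I have used $\|B_{c}^{\dagger}\tilde{x}\|_{2}^{2} - \|B^{\dagger}\tilde{x}\|_{2}^{2} = \tilde{x}^{T}(B_{c}^{\dagger T}B_{c}^{\dagger} - B^{\dagger T}B^{\dagger})\tilde{x}$. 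The third summand does not depend on $\tilde{x}$, so it remains only to bound the first two summands uniformly over the ball and add the three resulting quantities, using that the supremum of a sum is at most the sum of the suprema.

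For the quadratic term I would note that $B_{c}^{\dagger T}B_{c}^{\dagger} - B^{\dagger T}B^{\dagger}$ is symmetric and apply the Rayleigh inequality together with $\|\tilde{x}\|_{2}^{2} \le R^{2}$, which gives the bound $\tfrac{R^{2}}{t_f}\lambda_{\max}(B_{c}^{\dagger T}B_{c}^{\dagger} - B^{\dagger T}B^{\dagger})$. For the cross term, since $p = 1$ the object $B_{uc}^{T}B_{c}^{\dagger T}B_{c}^{\dagger}$ is a row vector, so $|B_{uc}^{T}B_{c}^{\dagger T}B_{c}^{\dagger}\tilde{x}|$ is an inner product and the Cauchy--Schwarz inequality yields $|B_{uc}^{T}B_{c}^{\dagger T}B_{c}^{\dagger}\tilde{x}| \le \|B_{uc}^{T}B_{c}^{\dagger T}B_{c}^{\dagger}\|_{2}\,\|\tilde{x}\|_{2} \le R\,\|B_{uc}^{T}B_{c}^{\dagger T}B_{c}^{\dagger}\|_{2}$. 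Adding these two bounds to the constant third summand reproduces exactly \eqref{eq:rA_bound}.

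The one point deserving care — and the closest thing to an obstacle — is the Rayleigh step: the estimate $\sup_{\|\tilde{x}\|_2 \le R}\tilde{x}^{T}M\tilde{x} \le R^{2}\lambda_{\max}(M)$ with $M = B_{c}^{\dagger T}B_{c}^{\dagger} - B^{\dagger T}B^{\dagger}$ is valid only when $\lambda_{\max}(M) \ge 0$. This nonnegativity does hold here, because for any $\tilde{x}$ in the range of $B_c$ the input $[(B_{c}^{\dagger}\tilde{x})^{T},\,0]^{T}$ is a feasible (if not minimum-norm) control for the nominal reachability problem with $u_{uc}\equiv 0$, so $\|B^{\dagger}\tilde{x}\|_{2} \le \|B_{c}^{\dagger}\tilde{x}\|_{2}$, hence $\tilde{x}^{T}M\tilde{x} \ge 0$ on the nontrivial subspace $\mathrm{range}(B_c)$, which forces $\lambda_{\max}(M) \ge 0$. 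Beyond this, the argument is just a short chain of standard norm inequalities, so I do not expect a genuine difficulty; the result is an \emph{upper} bound precisely because of the termwise maximization and the Cauchy--Schwarz step. Throughout, it is implicitly assumed that $t_f$ is large enough for \eqref{eq:EN_Dft} and \eqref{eq:ETbar_Dft_1Act} to be in force for every $\tilde{x}$ with $\|\tilde{x}\|_2 \le R$.
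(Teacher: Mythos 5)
Your proof is correct and follows essentially the same route as the paper's: substitute the exact expressions \eqref{eq:EN_Dft} and \eqref{eq:ETbar_Dft_1Act} into \eqref{eq:rA_def}, bound the cross term via Cauchy--Schwarz and the quadratic term via the Rayleigh inequality. Your added observation that $\lambda_{\max}\left(B_{c}^{\dagger T}B_{c}^{\dagger} - B^{\dagger T}B^{\dagger}\right) \ge 0$, which is needed for the step $\sup_{\|\tilde{x}\|_2 \le R}\tilde{x}^{T}M\tilde{x} \le R^{2}\lambda_{\max}(M)$ to be valid, is a worthwhile detail that the paper leaves implicit.
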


\begin{proof}
Using \eqref{eq:EN_Dft}, \eqref{eq:ETbar_Dft_1Act} and \eqref{eq:rA_def},
\begin{align}
r_A(t_f, R) &= \sup_{\|x_0\|_2 \leq R} \Biggl\{\frac{1}{t_f}\left\|B_{c}^{\dagger}x_0\right\|_{2}^{2} + t_f\left(\|B_{uc}\|_{2}^{2} + 1\right) + 2\left|B_{uc}^{T}B_{c}^{\dagger T}B_{c}^{\dagger}x_0\right| - \frac{1}{t_f}\left\|B^{\dagger}x_0\right\|_{2}^{2}\Biggr\} \nonumber \\
&\leq t_f\left(\|B_{uc}\|_{2}^{2} + 1\right) + 2R\|B_{uc}^{T}B_{c}^{\dagger T}B_{c}^{\dagger}\|_2 + \sup_{\|x_0\|_2 \leq R} \frac{1}{t_f}x_{0}^{T}\left(B_{c}^{\dagger T}B_{c}^{\dagger} - B^{\dagger T}B^{\dagger}\right)x_0, \label{eq:rA_1}
\end{align}
where the inequality follows from the Cauchy-Schwarz inequality applied to the term $2\left|B_{uc}^{T}B_{c}^{\dagger T}B_{c}^{\dagger}x_0\right|$. Note that $B_{c}^{\dagger T}B_{c}^{\dagger} - B^{\dagger T}B^{\dagger}$ is symmetric, and hence using the Rayleigh inequality and \eqref{eq:rA_1},
\begin{align*}
r_A(t_f, R) &\leq \frac{R^2}{t_f}\lambda_{\max}\left(B_{c}^{\dagger T}B_{c}^{\dagger} - B^{\dagger T}B^{\dagger}\right) + 2R\|B_{uc}^{T}B_{c}^{\dagger T}B_{c}^{\dagger}\|_2 + t_f\left(\|B_{uc}\|_{2}^{2} + 1\right),
\end{align*}
concluding the proof.
\end{proof}
In the following section, we begin considering the general nonlinear systems \eqref{eq:Nominal} and \eqref{eq:Malfunctioning}, deriving approximations for the energies defined in Section \ref{sec:Formulation}. These approximations rely on the difference in responses between the general nonlinear system \eqref{eq:Nominal} and the linear driftless system \eqref{eq:Nominal_Dft}.

\section{Approximations for Nominal, Malfunctioning and Worst-case Total Energies} \label{sec:Energies}
{We now consider the general nonlinear systems \eqref{eq:Nominal} and \eqref{eq:Malfunctioning}, and derive approximations for the nominal, malfunctioning and worst-case total energies defined in Section \ref{sec:Formulation}. We note that for these nonlinear systems, deriving a closed-form expression for the control that achieves $x(t_f) = x_{tg}$ is generally intractable (see, e.g., \cite{Khalil, LT07}). Similarly, closed-form expressions for the minimum energies \eqref{eq:EN_def} and \eqref{eq:EM_def} and subsequently the total \eqref{eq:ET_def} and worst-case total \eqref{eq:ET_bar_def} energies can generally not be derived \cite{CVOC} in the nonlinear case. To address this, we derive a condition on the mean value of the control in the interval $[0,t_f]$, in both the nominal and malfunctioning cases. This condition is the solution to Problem \ref{prob:char}, and is used to approximate the minimum energies and worst-case total energy. These approximations are used to derive a bound on the energetic resilience metric in Section \ref{sec:Resilience}.}

\subsection{Nominal and Malfunctioning Energies}
First consider the nominal system \eqref{eq:Nominal}. The state trajectory of this system satisfies
\begin{equation} \label{eq:Sol1}
x(t) = x_0 + \int_{0}^{t} f(x(\tau))\mathrm{d}\tau + \int_{0}^{t} g(x(\tau))u(\tau)\mathrm{d}\tau.
\end{equation}
Let $\tilde{x} \coloneqq x_0 - x_{tg}$, $B = g(x_0)$ and $f_0 = f(x_0)$. We assume $B$ has full row rank, similar to the assumption made in Section~\ref{sec:Res_Dft}. If $x(t_f) = x_{tg}$,
\begin{equation} \label{eq:Sol2}
\tilde{x} + t_fB\overline{u} = -\int_{0}^{t_f} f(x(t))\mathrm{d}t - \int_{0}^{t_f} d_g(x(t),x_0)u(t)\mathrm{d}t,
\end{equation}
where we use the property \eqref{eq:g_cond} for $g$, and $\overline{u} = \frac{1}{t_f}\int_{0}^{t_f}u(t)\mathrm{d}t$ denotes the mean value of $u$ in the interval $[0, t_f]$. {Let $v \equiv v(t_f, u) \coloneqq -\int_{0}^{t_f} f(x(t))\mathrm{d}t - \int_{0}^{t_f} d_g(x(t),x_0)u(t)\mathrm{d}t$. If $v$ was a fixed quantity with respect to $u$ and hence $\overline{u}$, \eqref{eq:Sol2} is linear in $\overline{u}$ and we can write the following least-norm solution \cite{P56}:
\begin{equation} \label{eq:uLS}
\overline{u}^{LS} = \frac{1}{t_f} B^{\dagger}(v-\tilde{x}),
\end{equation}
where $B^{\dagger}$ is the pseudoinverse of $B$, defined as in \cite{P55}. Note that such a least-norm solution is guaranteed to satisfy \eqref{eq:Sol2} only if $v-\tilde{x}$ lies in the range space of $B$, and we assume this is true. Next, the solution \eqref{eq:uLS} assumes $v$ is a fixed quantity with respect to $\overline{u}$, which is clearly not true as $v$ depends directly on $u$. Nevertheless, we use the norm of $\overline{u}^{LS}$ as defined in \eqref{eq:uLS} to approximate the nominal energy later in this section. A key step towards such an approximation is to show that $v$ is bounded, which is proved in the following proposition.}

\begin{proposition} \label{prop:v}
The quantity $v = -\int_{0}^{t_f} f(x(t))\mathrm{d}t - \int_{0}^{t_f} d_g(x(t),x_0)u(t)\mathrm{d}t$ is bounded as
\begin{equation} \label{eq:v_bound}
\|v\|_{\infty} \leq \overline{v} \coloneqq \frac{1}{D_f+D_g} \left[c\left(e^{t_f(D_f + D_g)}-1\right)-t_f(D_f+D_g)\|B\|_{\infty}\right]
\end{equation}
where $c = \|f_0\|_{\infty} + \|B\|_{\infty}$.
\end{proposition}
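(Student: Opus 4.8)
The plan is to derive a differential inequality for $\|x(t)\|_\infty$, apply Grönwall's inequality to bound the state trajectory, and then substitute that bound back into the integral expression for $v$. First I would write the state trajectory of the nominal system in integral form as in \eqref{eq:Sol1}, and using the Lipschitz conditions \eqref{eq:f_cond} and \eqref{eq:g_cond} together with $\|u(t)\|_\infty \leq 1$, bound $\|f(x(\tau))\|_\infty \leq \|f_0\|_\infty + D_f\|x(\tau)-x_0\|_\infty$ and $\|g(x(\tau))\|_\infty \leq \|B\|_\infty + D_g\|x(\tau)-x_0\|_\infty$. It is cleaner to track the deviation $z(t) \coloneqq \|x(t) - x_0\|_\infty$; then from \eqref{eq:Sol1} one gets $z(t) \leq \int_0^t \left(\|f_0\|_\infty + \|B\|_\infty + (D_f+D_g)z(\tau)\right)\mathrm{d}\tau = \int_0^t \left(c + (D_f+D_g)z(\tau)\right)\mathrm{d}\tau$, with $c = \|f_0\|_\infty + \|B\|_\infty$ as defined.

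Next I would convert this integral inequality into the form needed for Grönwall. Differentiating (or using the integral form of Grönwall directly), the function $q(t) \coloneqq c + (D_f+D_g)z(t)$ satisfies $\dot{q}(t) \leq (D_f+D_g)q(t)$ with $q(0) = c$, so Grönwall's inequality (as quoted in the Notation section) gives $q(t) \leq c\,e^{t(D_f+D_g)}$, hence $z(t) \leq \frac{c}{D_f+D_g}\left(e^{t(D_f+D_g)} - 1\right)$ for all $t \in [0,t_f]$. This is the key trajectory bound.

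Finally I would plug this into $v$. From the definition, $\|v\|_\infty \leq \int_0^{t_f}\|f(x(t))\|_\infty\,\mathrm{d}t + \int_0^{t_f}\|d_g(x(t),x_0)\|_\infty\|u(t)\|_\infty\,\mathrm{d}t \leq \int_0^{t_f}\left(\|f_0\|_\infty + D_f z(t)\right)\mathrm{d}t + \int_0^{t_f} D_g z(t)\,\mathrm{d}t = \int_0^{t_f}\left(\|f_0\|_\infty + (D_f+D_g)z(t)\right)\mathrm{d}t$. Substituting the Grönwall bound on $z(t)$ and integrating the exponential over $[0,t_f]$ yields $\|v\|_\infty \leq t_f\|f_0\|_\infty + \frac{c}{D_f+D_g}\left(e^{t_f(D_f+D_g)} - 1 - t_f(D_f+D_g)\right)$. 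It remains to check that this rearranges to the claimed $\overline{v}$ in \eqref{eq:v_bound}: expanding the claimed expression, $\overline{v} = \frac{c}{D_f+D_g}\left(e^{t_f(D_f+D_g)}-1\right) - t_f\|B\|_\infty$, and using $c = \|f_0\|_\infty + \|B\|_\infty$ one sees $\frac{c}{D_f+D_g}\cdot\left(-t_f(D_f+D_g)\right) = -t_f c = -t_f\|f_0\|_\infty - t_f\|B\|_\infty$, so the two forms indeed coincide.

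The main obstacle I anticipate is purely a matter of care rather than difficulty: getting the integral-form Grönwall step set up correctly (the paper only quotes the differential form, so I would either invoke the standard integral version or introduce $Q(t) = \int_0^t q(\tau)\,\mathrm{d}\tau$ and apply the differential form to $Q$), and then matching the final algebraic rearrangement to the exact form of $\overline{v}$ in \eqref{eq:v_bound}, taking care with the sign and the $\|B\|_\infty$ versus $\|f_0\|_\infty$ bookkeeping. One should also note implicitly that $D_f + D_g > 0$ for the division to be valid (the driftless case $D_f = D_g = 0$ is handled separately in Section \ref{sec:Res_Dft}).
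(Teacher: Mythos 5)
Your proposal is correct and follows essentially the same route as the paper: decompose $v$ using \eqref{eq:f_cond}, reduce to bounding $\int_0^{t_f}\|x(t)-x_0\|_\infty\,\mathrm{d}t$, and close the loop with Gr\"onwall's inequality, arriving at the identical intermediate bound $t_f\|f_0\|_\infty + \tfrac{c}{D_f+D_g}\bigl(e^{t_f(D_f+D_g)}-1-t_f(D_f+D_g)\bigr)$ before the final algebraic rearrangement. The one step you flag as needing care --- converting the integral inequality into a form amenable to the differential Gr\"onwall statement --- is resolved in the paper exactly by the repair you suggest, namely introducing the integrated quantity $Q(t)=\int_0^t\|x(\tau)-x_0\|_\infty\,\mathrm{d}\tau$ (together with auxiliary functions $Q_1,Q_2$) and applying the differential form to it.
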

\begin{proof}
See Appendix \ref{app:A}.
\end{proof}

{It is clear that $v(t_f, u)$ denotes the difference between system responses under the driftless dynamics \eqref{eq:Nominal_Dft} and the general nonlinear dynamics \eqref{eq:Nominal} for a given control input $u \in \U$. Proposition \ref{prop:v} thus shows that this difference between system responses is bounded and quantified by $\overline{v}$, where $\overline{v}$ is a function of the system dynamics, initial condition and final time. In particular, when $f(x(t)) \equiv 0$ and $g(x(t)) \equiv B$, the dynamics in \eqref{eq:Nominal} reduce to \eqref{eq:Nominal_Dft} and we obtain $v(t_f, u) = 0$, $\overline{v} = 0$ since $D_f = D_g = 0$ and $f_0 = 0$. Next, let $D_S \coloneqq D_f + D_g$. In investigating how the difference in system responses between \eqref{eq:Nominal} and \eqref{eq:Nominal_Dft} changes as $D_f$ and $D_g$ increase, we note that
$$
\frac{\partial\overline{v}}{\partial D_S} = \frac{c[(t_fD_S-1)e^{t_fD_S} + 1]}{D_{S}^{2}} \geq 0,
$$%
i.e., $\overline{v}$ is a non-decreasing function of $D_S$. Thus, the difference in system responses increases as $D_S$ increases. The facts discussed above are used to motivate the approximations for different energies derived later in this section.}

{We now use the bound $\overline{v}$ to analyze conditions under which the constraint $u \in \U$ is satisfied. Note that $u \in \U$ implies $\|\overline{u}\|_{\infty} \leq 1$. For a fixed $v$, $\overline{u}^{LS}$ as defined in \eqref{eq:uLS} provides an expression for $\overline{u}$ and we can check the condition $\|\overline{u}^{LS}\|_{\infty} \leq 1$. To consider all possible $v$, we use the bound $\|v\|_{\infty} \leq \overline{v}$ and check the following condition for a given $t_f$:}
\begin{equation} \label{eq:ucond}
\max_{\|v\|_{\infty} \leq \overline{v}} \frac{1}{t_f} \left\|B^{\dagger}(v-\tilde{x})\right\|_{\infty} \leq 1.
\end{equation}
{If \eqref{eq:ucond} is satisfied, then $\|\overline{u}^{LS}\|_{\infty} \leq 1$ is true for all $v(t_f, u)$, which is a necessary condition to ensure $u \in \U$ for a given $t_f$.} Furthermore, \eqref{eq:ucond} is equivalent to checking 
\begin{align*}
&\max_{\|v\|_{\infty} \leq \overline{v}} \frac{1}{t_f} \left[B^{\dagger}(v-\tilde{x})\right]_{i} \leq 1 \\
\text{and } &\max_{\|v\|_{\infty} \leq \overline{v}} -\frac{1}{t_f} \left[B^{\dagger}(v-\tilde{x})\right]_{i} \leq 1
\end{align*}
for each $i = 1, \ldots, m+p$, where $[\cdot]_i$ denotes the $i$-th component of the argument. Each of the above conditions is linear in $v$, and thus can be checked efficiently on the vertices of the hypercube $\|v\|_{\infty} \leq \overline{v}$ \cite{LinOpt}.

{We now use \eqref{eq:uLS} to obtain the nominal energy \eqref{eq:EN_def}. For a general nonlinear system of the form \eqref{eq:Nominal}, obtaining closed-form expressions for optimal control signals is generally intractable \cite{CVOC}. However, the norm of $\overline{u}^{LS}$ for a given $v$ can be used to approximate the nominal energy using the following lemma.}

\begin{lemma} \label{lem:approx}
For a piecewise continuous function $z:[0, t_f] \to \mathbb{R}^{n_z}$ with a given mean value $\overline{z} \in \mathbb{R}^{n_z}$, the energy of $z$ is related to its mean value by
\begin{equation} \label{eq:approx}
\int_{0}^{t_f} z^T(t)z(t)\mathrm{d}t \geq t_f \|\overline{z}\|_{2}^{2},
\end{equation}
where equality holds when $z(t) = \overline{z}$ for all $t \in [0, t_f]$.
\end{lemma}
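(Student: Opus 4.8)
The plan is to prove the inequality \eqref{eq:approx} by applying Jensen's inequality to the convex map $\varphi(\cdot) = \|\cdot\|_2^2$ on $\mathbb{R}^{n_z}$, which is exactly the tool recalled in the Notation and Facts section. First I would note that $\varphi(w) = \|w\|_2^2 = w^Tw$ is convex on $\mathbb{R}^{n_z}$ (it is a positive semidefinite quadratic form). Then Jensen's inequality, in the integral form stated in the excerpt, gives
\begin{equation*}
\varphi\!\left(\frac{1}{t_f}\int_{0}^{t_f} z(t)\,\mathrm{d}t\right) \leq \frac{1}{t_f}\int_{0}^{t_f} \varphi(z(t))\,\mathrm{d}t,
\end{equation*}
i.e.
\begin{equation*}
\left\|\frac{1}{t_f}\int_{0}^{t_f} z(t)\,\mathrm{d}t\right\|_2^2 \leq \frac{1}{t_f}\int_{0}^{t_f} \|z(t)\|_2^2\,\mathrm{d}t.
\end{equation*}
Recognizing the left-hand side as $\|\overline{z}\|_2^2$ and multiplying through by $t_f > 0$ yields $t_f\|\overline{z}\|_2^2 \leq \int_0^{t_f} z^T(t)z(t)\,\mathrm{d}t$, which is \eqref{eq:approx}.

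For the equality case, I would argue the "if" direction directly: if $z(t) = \overline{z}$ for all $t \in [0,t_f]$, then $\int_0^{t_f} z^T(t)z(t)\,\mathrm{d}t = \int_0^{t_f}\|\overline{z}\|_2^2\,\mathrm{d}t = t_f\|\overline{z}\|_2^2$, so equality holds. (The statement only claims this direction, so no converse is needed; if one wanted it, one would observe that equality in Jensen for a strictly convex function along each coordinate forces $z$ to be a.e.\ constant, hence constant by piecewise continuity — but that is not required here.)

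An alternative, self-contained route that avoids invoking Jensen is to expand $\int_0^{t_f}\|z(t) - \overline{z}\|_2^2\,\mathrm{d}t \geq 0$: since $\int_0^{t_f}(z(t)-\overline{z})\,\mathrm{d}t = t_f\overline{z} - t_f\overline{z} = 0$ by definition of the mean, the cross term vanishes and one gets $\int_0^{t_f}\|z(t)\|_2^2\,\mathrm{d}t - t_f\|\overline{z}\|_2^2 \geq 0$, with equality iff $z(t) = \overline{z}$ a.e. This simultaneously proves the inequality and characterizes equality, and is arguably cleaner than the Jensen argument; I would likely present this version.

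There is essentially no obstacle here — the statement is a standard variance-type inequality, and the only thing to be slightly careful about is that $z$ is vector-valued, so one should either apply the scalar statement coordinatewise and sum, or use the vector form of Jensen / the direct expansion above, which handles the vector case transparently since $\|\cdot\|_2^2$ is a sum of squares of the components.
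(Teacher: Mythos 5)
Your proposal is correct and its main argument — applying Jensen's inequality to the convex function $\|\cdot\|_2^2$ and multiplying by $t_f$ — is exactly the paper's proof, with the equality case handled by the same direct substitution. The alternative variance-style expansion of $\int_0^{t_f}\|z(t)-\overline{z}\|_2^2\,\mathrm{d}t \geq 0$ is a valid and slightly more self-contained route, but it is not needed; either version suffices.
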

\begin{proof}
The squared norm $\|\cdot\|_{2}^{2}$ is a convex function, Using Jensen's inequality \cite{NPBook}, we have $\|\overline{z}\|_{2}^{2} = \left\|\frac{1}{t_f}\int_{0}^{t_f}z(t)\mathrm{d}t\right\|_{2}^{2} \leq \frac{1}{t_f}\int_{0}^{t_f} \|z(t)\|_{2}^{2}\mathrm{d}t$. From this, the result follows.
\end{proof}

{Any control signal $u$ that achieves fixed-time reachability must satisfy \eqref{eq:Sol2}, where $\overline{u}$ is its mean value. On the basis of Lemma \ref{lem:approx}, we use the right-hand side of \eqref{eq:approx} to approximate the energy in $u$. Further, based on \cite[Lemma 1]{PO25a}, we are interested in the mean value with minimum-norm that solves \eqref{eq:Sol2}. This quantity is given by $\overline{u}^{LS}$ as defined in \eqref{eq:uLS} for a particular $v$.} We now make the central approximation in this article, namely, that 
\begin{equation} \label{eq:ENapprox}
E_{N}^{*}(x_0, x_{tg}, t_f) \approx t_f \|\overline{u}^{LS}\|_{2}^{2},
\end{equation}
thus approximating \eqref{eq:approx} with equality using the minimum-norm mean $\overline{u}^{LS}$. {We remark that the conservatism of this approximation can be measured by comparing it to the energy of any control signal $u \in \U$ that achieves the objective $x(t_f) = x_{tg}$.} 

{Note that the approximation \eqref{eq:ENapprox} is exact when the optimal control is constant, i.e., $u^*(t) = \overline{u}^{LS}$ for all $t \in [0, t_f]$. In particular, this approximation is exact in the case of linear driftless systems where $D_f = D_g = 0$ and $f_0 = 0$, as shown in \cite{PO25a} where the optimal control $u^*(t) = \overline{u}_{LS}$ for all $t \in [0, t_f]$. In this case, the quantities $v(t_f, u)$ and its bound $\overline{v}$ reduce to zero. For general nonlinear systems, this approximation is more accurate for smaller values of $D_f$ and $D_g$, as such settings result in small values of $\overline{v}$. Here, we recall that $\overline{v}$ quantifies the difference in system responses between \eqref{eq:Nominal} and \eqref{eq:Nominal_Dft}, and $\overline{v}$ is a non-decreasing function of $D_S$.} We also use an approximation similar to \eqref{eq:ENapprox} later in the malfunctioning case. These approximations are used to circumvent the difficulties in obtaining closed-form expressions for optimal control signals and the corresponding minimum energies \cite{CVOC}. Under the assumption that this approximation is exact, we obtain the nominal energy
\begin{equation} \label{eq:EN}
E_{N}^{*}(x_0, x_{tg}, t_f) = \frac{1}{t_f} \left\|B^{\dagger}(v-\tilde{x})\right\|_{2}^{2},
\end{equation}
where $\|v\|_{\infty} \leq \overline{v}$ \eqref{eq:v_bound}. Equation \eqref{eq:EN} thus provides the approximation to the nominal energy using the norm of the mean value $\overline{u}^{LS}$ from \eqref{eq:uLS}. 

We now consider the malfunctioning case, which proceeds along similar lines to the nominal case. The state trajectory of the malfunctioning system \eqref{eq:Malfunctioning} is given by
\begin{equation} \label{eq:MalfSol1}
x(t) = x_0 + \int_{0}^{t} f(x(\tau))\mathrm{d}\tau + \int_{0}^{t}g_c(x(\tau))u_c(\tau)\mathrm{d}\tau + \int_{0}^{t} g_{uc}(x(\tau))u_{uc}(\tau)\mathrm{d}\tau.
\end{equation}
Let $B_c \coloneqq g_c(x_0)$ and $B_{uc} \coloneqq g_{uc}(x_0)$. Then, under fixed-time reachability with $x(t_f) = x_{tg}$, 
\begin{equation} \label{eq:MalfSol2}
\tilde{x} + t_fB_c\overline{u}_c + t_fB_{uc}\overline{u}_{uc} = -\int_{0}^{t} f(x(\tau))\mathrm{d}\tau - \int_{0}^{t_f}d_g(x(t),x_0)u(t)\mathrm{d}t
\end{equation}
where $u(t) = \left[u_{c}^{\T}(t), u_{uc}^{\T}(t)\right]^{\T}$, $\overline{u}_c \coloneqq \frac{1}{t_f}\int_{0}^{t_f}u_{c}(t)\mathrm{d}t$ and $\overline{u}_{uc} \coloneqq \frac{1}{t_f}\int_{0}^{t_f}u_{uc}(t)\mathrm{d}t$ are the mean values of the controlled and uncontrolled inputs. As with the nominal case and similar to \cite{PO25a}, we assume $B_c$ has full row rank. {Note that the right-hand side equals $v$ as defined earlier. For a fixed $v$, the above equation is linear in $\overline{u}_c$, and similar to the nominal case, we have the least-norm solution
\begin{equation} \label{eq:ucLS}
\overline{u}_{c}^{LS} = \frac{1}{t_f}B_{c}^{\dagger}\left(v-\tilde{x}-t_fB_{uc}\overline{u}_{uc}\right),
\end{equation}
where $\|v\|_{\infty} \leq \overline{v}$ from \eqref{eq:v_bound}. We note here that \eqref{eq:uLS} and \eqref{eq:ucLS} provide a characterization of the controls ensuring $x_{tg}$ is reachable and resiliently reachable, thus addressing Problem~\ref{prob:char}. We also remind the reader that $v$ is not a fixed quantity, but the norm of $\overline{u}_{c}^{LS}$ can be used to approximate the malfunctioning energy $E_{M}^{*}(x_0, x_{tg}, t_f, u_{uc})$ in \eqref{eq:EM_def} as $v$ is bounded. First, we investigate conditions under which the constraint $u_c \in \Uc$ is satisfied using \eqref{eq:ucLS} and the bound $\overline{v}$.}

{If $\|u_c(t)\|_{\infty} \leq 1$, then $\overline{u}_{c}^{LS}$ in \eqref{eq:ucLS} satisfies $\|\overline{u}_{c}^{LS}\|_{\infty} \leq 1$. We also note that $\|u_{uc}(t)\|_{\infty} \leq 1$ in \eqref{eq:SetUuc} implies $\|\overline{u}_{uc}\|_{\infty} \leq 1$.} As with the nominal case, reducing the expression $\|\overline{u}_{c}^{LS}\|_{\infty} \leq 1$ to a condition on $t_f$ is not straightforward due to the presence of $v(t_f, u)$, as well as the nonlinear dependence of the expression on $t_f$. We can again check whether $\|\overline{u}_{c}^{LS}\|_{\infty} \leq 1$ for a given $t_f$ by using the known bound $\|v\|_{\infty} \leq \overline{v}$ in the following condition:
\begin{equation} \label{eq:uccond}
\max_{\|v\|_{\infty} \leq \overline{v}} \max_{\|\overline{u}_{uc}\|_{\infty} \leq 1} \frac{1}{t_f} \left\|B_{c}^{\dagger}\left(v-\tilde{x}-t_fB_{uc}\overline{u}_{uc}\right)\right\|_{\infty} \leq 1.
\end{equation}
As before, if \eqref{eq:uccond} is satisfied, then $\|\overline{u}_{c}^{LS}\|_{\infty} \leq 1$ is true for all $v(t_f, u)$ and all $u_{uc} \in \Uuc$, which is necessary to ensure $u_c \in \Uc$. Equation \eqref{eq:uccond} reduces to the following conditions:
\begin{align*}
&\max_{\|v\|_{\infty} \leq \overline{v}} \max_{\|\overline{u}_{uc}\|_{\infty} \leq 1} \frac{1}{t_f} \left[B_{c}^{\dagger}(v-\tilde{x}-t_fB_{uc}\overline{u}_{uc})\right]_{j} \leq 1 \\
\text{and } &\max_{\|v\|_{\infty} \leq \overline{v}} \max_{\|\overline{u}_{uc}\|_{\infty} \leq 1} -\frac{1}{t_f} \left[B_{c}^{\dagger}(v-\tilde{x}-t_fB_{uc}\overline{u}_{uc})\right]_{j} \leq 1
\end{align*}
for each $j = 1, \ldots, m$. The objective of the inner maximum is linear in $\overline{u}_{uc}$, and hence its solution lies on the vertices of the hypercube $\|\overline{u}_{uc}\|_{\infty} \leq 1$. Following this, the objective of the outer maximum is also linear in $v$, and the solution lies on the vertices of the hypercube $\|v\|_{\infty} \leq \overline{v}$. Thus, the above conditions can be checked efficiently on the vertices of the two hypercubes $\|\overline{u}_{uc}\|_{\infty} \leq 1$ and $\|v\|_{\infty} \leq \overline{v}$ \cite{LinOpt}.

{Finally, we consider the malfunctioning energy $E_{M}^{*}(x_0, x_{tg}, t_f, u_{uc})$ in \eqref{eq:EM_def} and note that obtaining an exact expression is intractable \cite{CVOC} for the general nonlinear system \eqref{eq:Malfunctioning}. We thus use \eqref{eq:ucLS} to approximate $E_{M}^{*}(x_0, x_{tg}, t_f, u_{uc})$ using Lemma \ref{lem:approx}. From \eqref{eq:approx}, we have $E_{M}^{*}(x_0, x_{tg}, t_f, u_{uc}) \geq t_f\|\overline{u}_{c}^{LS}\|_{2}^{2}$, where we choose the mean of the optimal control signal to satisfy \eqref{eq:ucLS}, motivated by \cite[Lemma 1]{PO25a}. Then, similar to the approximation for the nominal energy \eqref{eq:ENapprox}, we write the approximate malfunctioning energy 
\begin{equation} \label{eq:EMapprox}
E_{M}^{*}(x_0, x_{tg}, t_f, u_{uc}) \approx t_f\|\overline{u}_{c}^{LS}\|_{2}^{2},
\end{equation}
where we recall that $\overline{u}_{c}^{LS}$ depends on $v(t_f, u)$, which is bounded as in \eqref{eq:v_bound}. As before, this approximation is exact when the optimal control is constant, i.e., $u_{c}^{*}(t) = \overline{u}_{c}^{LS}$ for all $t \in [0, t_f]$, which is true in the case of linear driftless systems as shown in \cite{PO25a}. Under the assumption that this approximation is exact, we obtain
\begin{equation} \label{eq:EM}
E_{M}^{*}(x_0, x_{tg}, t_f, u_{uc}) = \frac{1}{t_f}\left\|B_{c}^{\dagger}\left(v-\tilde{x}-t_fB_{uc}\overline{u}_{uc}\right)\right\|_{2}^{2},
\end{equation}
with $\|v\|_{\infty} \leq \overline{v}$.}

{Equations \eqref{eq:EN} and \eqref{eq:EM} provide the approximations for the nominal and malfunctioning energies and are obtained using Proposition \ref{prop:v} and Lemma \ref{lem:approx}. In Appendix \ref{app:B}, we list all approximate and non-approximate quantities in this paper for clarity. The conditions \eqref{eq:ucond} and \eqref{eq:uccond} are used to check whether the constraints $u \in \U$ and $u_c \in \Uc$ are satisfied for a given $t_f$. In the next subsection, we use these results to obtain an approximate upper bound on the worst-case total energy $\overline{E}_{T}(x_0, x_{tg}, t_f)$ defined in \eqref{eq:ET_bar_def}.}

\subsection{Worst-case Total Energy}
We now consider the worst-case total energy $\overline{E}_{T}(x_0, x_{tg}, t_f)$, defined in \eqref{eq:ET_bar_def}. {Note that the expression satisfied by the mean of the controlled input $\overline{u}_{c}^{LS}$ in \eqref{eq:ucLS} and the approximate malfunctioning energy \eqref{eq:EM} depend on the uncontrolled input $u_{uc}$. The maximal effect of $u_{uc}$ is thus quantified by the worst-case total energy, representing the energy used by all actuators of the malfunctioning system. The following proposition provides an upper bound on the worst-case total energy based on the approximations \eqref{eq:EN} and \eqref{eq:EM}.}

\begin{proposition}[Worst-case Total Energy] \label{prop:ET_bar}
Under the assumption that the approximations for the nominal \eqref{eq:EN} and malfunctioning \eqref{eq:EM} energies are exact, the worst-case total energy $\overline{E}_{T}(x_0, x_{tg}, t_f)$ is bounded from above as follows:
\begin{equation} \label{eq:ET_bar}
\overline{E}_T(x_0, x_{tg}, t_f) \leq \frac{1}{t_f}\left\|B_{c}^{\dagger}(v-\tilde{x})\right\|_{2}^{2} + t_f\left(\sum_{i = 1}^{p} \lambda_i \|V\|_{1}^{2} + p\right) + 2\left\|B_{uc}^{T}B_{c}^{\dagger T}B_{c}^{\dagger}(v-\tilde{x})\right\|_{1},
\end{equation}
where $p$ is the dimension of the uncontrolled input $u_{uc}$, and $B_{uc}^{T}B_{uc}$ has the spectral decomposition $B_{uc}^{T}B_{uc} = V\Lambda V^T$, with $\Lambda = \diag\{\lambda_1, \ldots, \lambda_p\}$.
\end{proposition}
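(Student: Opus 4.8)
The plan is to follow the same route that produced the linear driftless bound \eqref{eq:ETbar_Dft} in \cite[Proposition 1]{PO25a}, exploiting that the approximate malfunctioning energy \eqref{eq:EM} has exactly the form of its driftless analogue \eqref{eq:EM_Dft} with $-\tilde{x}$ replaced by $v-\tilde{x}$. First I would bound the uncontrolled energy: for any $u_{uc}\in\Uuc$ we have $\|u_{uc}(t)\|_\infty\le1$, hence $\|u_{uc}(t)\|_2^2\le p$ for every $t$, and integrating over $[0,t_f]$ gives $\|u_{uc}\|_{\mathcal{L}_2}^2\le t_fp$. Substituting this and the (assumed exact) approximation \eqref{eq:EM} into the definition \eqref{eq:ET_def} of the total energy, one gets, for every $u_{uc}\in\Uuc$ and any fixed $v$ with $\|v\|_\infty\le\overline{v}$,
\begin{equation*}
E_T(x_0,x_{tg},t_f,u_{uc}) \;\le\; \frac{1}{t_f}\left\|B_{c}^{\dagger}\left(v-\tilde{x}-t_fB_{uc}\overline{u}_{uc}\right)\right\|_{2}^{2} + t_fp ,
\end{equation*}
and the right-hand side depends on $u_{uc}$ only through its mean $\overline{u}_{uc}$.

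Next I would take the supremum over $\Uuc$. Since $\|u_{uc}(t)\|_\infty\le1$ forces $\|\overline{u}_{uc}\|_\infty\le1$ (as noted before \eqref{eq:uccond}) and conversely every point of the box $\{\|\overline{u}_{uc}\|_\infty\le1\}$ is the mean of an admissible constant input, the supremum reduces to a maximization over this box (with $v$ held fixed). Expanding the square splits the objective into a constant term $\frac{1}{t_f}\|B_{c}^{\dagger}(v-\tilde{x})\|_{2}^{2}$, a term linear in $\overline{u}_{uc}$ with coefficient vector $-2B_{uc}^{T}B_{c}^{\dagger T}B_{c}^{\dagger}(v-\tilde{x})$, and a term quadratic in $\overline{u}_{uc}$ equal to $t_f\,\overline{u}_{uc}^{T}B_{uc}^{T}B_{c}^{\dagger T}B_{c}^{\dagger}B_{uc}\overline{u}_{uc}$. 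Bounding the supremum of the sum by the sum of the suprema, the linear term contributes the maximum of a linear functional over the unit $\ell_\infty$-ball, which equals $2\|B_{uc}^{T}B_{c}^{\dagger T}B_{c}^{\dagger}(v-\tilde{x})\|_1$; and the quadratic term is bounded exactly as in the proof of \eqref{eq:ETbar_Dft} in \cite[Proposition 1]{PO25a}, via the spectral decomposition $B_{uc}^{T}B_{uc}=V\Lambda V^{T}$, $\Lambda=\diag\{\lambda_1,\dots,\lambda_p\}$, the substitution $y=V^{T}\overline{u}_{uc}$ and the sub-multiplicative estimate $\|y\|_\infty\le\|V\|_1\|\overline{u}_{uc}\|_\infty\le\|V\|_1$, yielding at most $t_f\sum_{i=1}^{p}\lambda_i\|V\|_1^2$. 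Adding these three contributions together with $t_fp$ gives \eqref{eq:ET_bar}.

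The main obstacle I expect is the quadratic term in $\overline{u}_{uc}$: being convex in $\overline{u}_{uc}$, its maximum over the box is attained at a vertex $\overline{u}_{uc}\in\{-1,1\}^p$, but for general $p$ there is no compact closed form for this maximum --- in contrast to the sharp $p=1$ expression \eqref{eq:ETbar_Dft_1Act} --- so one must settle for the spectral/sub-multiplicative overestimate, which, together with the use of $\sup(A+B)\le\sup A+\sup B$, is the source of the bound's conservatism. A secondary point to track carefully is the status of $v$: since $v=v(t_f,u)$ in principle depends on the full control, the statement is read as holding for the realized $v$, equivalently for every $v$ in the hypercube $\|v\|_\infty\le\overline{v}$ furnished by Proposition \ref{prop:v}, with $\overline{u}_{uc}$ the only quantity varied in the supremum; and the least-norm representations \eqref{eq:ucLS} used throughout rest on the full-row-rank assumption on $B_c$ from Section \ref{sec:Energies}.
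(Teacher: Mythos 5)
Your overall route is the same as the paper's: substitute the (assumed exact) expression \eqref{eq:EM} into \eqref{eq:ET_bar_def}, reduce the supremum over $\Uuc$ to a maximization over the box $\|\overline{u}_{uc}\|_{\infty}\leq 1$, expand the square into constant, linear and quadratic pieces, bound the supremum of the sum by the sum of suprema, evaluate the linear piece as an $\ell_1$ norm, bound the quadratic piece spectrally, and bound $\|u_{uc}\|_{\mathcal{L}_2}^{2}$ by $t_f p$. The constant term, the linear term ($T_2$ in the paper), and the $t_fp$ term ($T_3$) all match the paper's treatment, and your observation that the energy expression depends on $u_{uc}$ only through $\overline{u}_{uc}$ is exactly how the paper proceeds.

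There is, however, a gap at the quadratic term. You correctly identify it as $t_f\,\overline{u}_{uc}^{T}B_{uc}^{T}B_{c}^{\dagger T}B_{c}^{\dagger}B_{uc}\overline{u}_{uc}$ --- that is what expanding $\frac{1}{t_f}\bigl\|B_{c}^{\dagger}(v-\tilde{x}-t_fB_{uc}\overline{u}_{uc})\bigr\|_{2}^{2}$ actually produces --- but you then bound it by $t_f\sum_{i=1}^{p}\lambda_i\|V\|_{1}^{2}$ using the spectral decomposition of $B_{uc}^{T}B_{uc}$. That decomposition diagonalizes the quadratic form $\overline{u}_{uc}^{T}B_{uc}^{T}B_{uc}\overline{u}_{uc}$, not $\overline{u}_{uc}^{T}B_{uc}^{T}B_{c}^{\dagger T}B_{c}^{\dagger}B_{uc}\overline{u}_{uc}$, so the step is a non sequitur: the inequality $\overline{u}_{uc}^{T}B_{uc}^{T}B_{c}^{\dagger T}B_{c}^{\dagger}B_{uc}\overline{u}_{uc}\leq\overline{u}_{uc}^{T}B_{uc}^{T}B_{uc}\overline{u}_{uc}$ needed to pass between the two forms holds only when $\lambda_{\max}(B_{c}^{\dagger T}B_{c}^{\dagger})\leq 1$, and fails whenever $B_c$ has a nonzero singular value below $1$ (scale $B_c$ down and the quadratic form blows up while $\sum_i\lambda_i\|V\|_1^2$ is unchanged). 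To land on \eqref{eq:ET_bar} as stated you must either justify that intermediate inequality or redo the spectral step with the eigendecomposition of $B_{uc}^{T}B_{c}^{\dagger T}B_{c}^{\dagger}B_{uc}$, which changes the constants. For what it is worth, the paper's own proof writes the quadratic piece as $T_1=\sup t_f\,\overline{u}_{uc}^{T}B_{uc}^{T}B_{uc}\overline{u}_{uc}$ already at the splitting step \eqref{eq:ETbar1}, so it passes over this same point without comment; your more careful expansion makes the hole visible, but as a derivation of \eqref{eq:ET_bar} your chain of inequalities is broken at exactly that link.
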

\begin{proof}
From the definition of the worst-case total energy \eqref{eq:ET_bar_def} and the expression for the malfunctioning energy \eqref{eq:EM},
$$
\overline{E}_T(x_0, x_{tg}, t_f) = \sup_{u_{uc} \in \Uuc} \frac{1}{t_f}\left\|B_{c}^{\dagger}\left(v-\tilde{x}-t_fB_{uc}\overline{u}_{uc}\right)\right\|_{2}^{2} + \int_{0}^{t_f} u_{uc}^{T}(t)u_{uc}(t)\mathrm{d}t.
$$
Obtaining a closed-form analytical expression for the above problem is not straightforward, and hence we aim to bound these terms from above. Splitting the supremum over three different terms, we obtain the following bound:
\begin{align}
\overline{E}_T(x_0, x_{tg}, t_f) &\leq \frac{1}{t_f}\left\|B_{c}^{\dagger}(v-\tilde{x})\right\|_{2}^{2} + \underbrace{\sup_{\|\overline{u}_{uc}\|_{\infty} \leq 1} t_f \overline{u}_{uc}^{T}B_{uc}^{T}B_{uc}\overline{u}_{uc}}_{=T_1} + \underbrace{\sup_{\|\overline{u}_{uc}\|_{\infty} \leq 1} 2(\tilde{x}-v)^{T}B_{c}^{\dagger T}B_{c}^{\dagger}B_{uc}\overline{u}_{uc}}_{=T_2} \nonumber \\
&+ \underbrace{\sup_{u_{uc} \in \Uuc} \int_{0}^{t_f} u_{uc}^{T}(t)u_{uc}(t)\mathrm{d}t}_{=T_3}, \label{eq:ETbar1}
\end{align}
where $u_{uc} \in \Uuc$ implies the constraint $\|\overline{u}_{uc}\|_{\infty} \leq 1$ holds in the terms $T_1$ and $T_2$. We consider each term in \eqref{eq:ETbar1} separately, beginning with $T_1$. Let the spectral decomposition of $B_{uc}^{T}B_{uc}$ be $B_{uc}^{T}B_{uc} = V\Lambda V^T$, where $\Lambda = \diag\{\lambda_1, \ldots, \lambda_p\}$ and $V$ is the orthonormal matrix of eigenvectors of $B_{uc}^{T}B_{uc}$. Next, define $q \coloneqq V^T\overline{u}_{uc}$. Using the sub-multiplicative property of norms, $\|q\|_{\infty} \leq \|V^T\|_{\infty} \|\overline{u}_{uc}\|_{\infty} \leq \|V\|_1$ since $\|\overline{u}_{uc}\|_{\infty} \leq 1$. Due to the use of the sub-multiplicative property, the set $\left\{\overline{u}_{uc} : \|\overline{u}_{uc}\|_{\infty} \leq 1\right\}$ is a subset of the set $\left\{q:\|q\|_{\infty} \leq \|V\|_1\right\}$, where $q = V^T\overline{u}_{uc}$. This property results in the following upper bound for $T_1$:
\begin{equation} \label{eq:T1}
T_1 \leq t_f \sup_{\|q\|_{\infty} \leq \|V\|_1} q^T\Lambda q = t_f\sup_{\|q\|_{\infty} \leq \|V\|_1} \sum_{i = 1}^{p} \lambda_iq_{i}^{2} = t_f\sum_{i = 1}^{p} \lambda_i \|V\|_{1}^{2}.
\end{equation}
In \eqref{eq:T1}, each component of the $q^*$ achieving the supremum takes on the maximum possible value, i.e., $q_{i}^{*} = \pm \|V\|_1$. Next, the term $T_2$ can be rewritten as the problem $\sup_{\|\overline{u}_{uc}\|_{\infty} \leq 1} \beta^T\overline{u}_{uc} = \|\beta\|_1$, where $\beta = 2B_{uc}^{T}B_{c}^{\dagger T}B_{c}^{\dagger}(\tilde{x}-v)$ and the optimal $\overline{u}_{uc}$ for this particular term is $\overline{u}_{uc}^{*} = \sign(\beta) = \sign\left(2B_{uc}^{T}B_{c}^{\dagger T}B_{c}^{\dagger}(\tilde{x}-v)\right)$. Then,
\begin{equation} \label{eq:T2}
T_2 = 2\left\|B_{uc}^{T}B_{c}^{\dagger T}B_{c}^{\dagger}(v-\tilde{x})\right\|_{1}.
\end{equation}
Finally, $T_3$ is clearly maximized by each component of the uncontrolled input $u_{uc}$ taking on the maximal value of $+1$ or $-1$ at each time instant $t$, so that $u_{uc}^{T}(t)u_{uc}(t) = p$ for all $t$, and 
\begin{equation} \label{eq:T3}
T_3 = t_fp.
\end{equation}
Using \eqref{eq:T1}, \eqref{eq:T2} and \eqref{eq:T3} in \eqref{eq:ETbar1}, the result \eqref{eq:ET_bar} follows. 
\end{proof}

Proposition \ref{prop:ET_bar} thus provides an upper bound on the worst-case total energy, and we emphasize that this upper bound assumes the approximations for the nominal \eqref{eq:EN} and malfunctioning \eqref{eq:EM} energies are exact. In the next subsection, we consider the special case when control authority is lost over one actuator, i.e., when $p = 1$. In this particular case, we derive a closed-form expression instead of an upper bound for $\overline{E}_T(x_0, x_{tg}, t_f)$, assuming the above approximations are still exact. We subsequently use this closed-form expression to obtain a bound on the energetic resilience metric \eqref{eq:rA_def} in Section \ref{sec:Resilience}.

\subsection{Loss of Authority over One Actuator} \label{sec:1Act}
When control authority is lost over one actuator, we have $p = 1$, $u_{uc}(t) \in \mathbb{R}$ and $B_{uc} \in \mathbb{R}^n$. Then, the worst-case total energy $\overline{E}_T(x_0, x_{tg}, t_f)$ is given by the following proposition.

\begin{proposition}
Under loss of control authority over one actuator, i.e., when $p = 1$, and under the assumption that the approximations for the nominal \eqref{eq:EN} and malfunctioning \eqref{eq:EM} energies are exact, the worst-case total energy $\overline{E}_T(x_0, x_{tg}, t_f)$ is expressed as 
\begin{equation} \label{eq:ET_bar_1Act}
\overline{E}_T(x_0, x_{tg}, t_f) = \frac{1}{t_f}\left\|B_{c}^{\dagger}(v-\tilde{x})\right\|_{2}^{2} + t_f\left(\|B_{uc}\|_{2}^{2}+1\right) + 2\left|B_{uc}^{T}B_{c}^{\dagger T}B_{c}^{\dagger}(v-\tilde{x})\right|.
\end{equation}
\end{proposition}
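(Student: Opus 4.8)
The plan is to specialize the general upper bound from Proposition \ref{prop:ET_bar} to the case $p=1$ and show that the bound is in fact attained, so the inequality becomes an equality. First I would note that when $p=1$, the matrix $B_{uc}^{T}B_{uc}$ is a scalar, namely $\|B_{uc}\|_2^2$, so its spectral decomposition has $\lambda_1 = \|B_{uc}\|_2^2$ and $V = \pm 1$, hence $\|V\|_1 = 1$. Substituting into \eqref{eq:ET_bar} immediately gives the right-hand side of \eqref{eq:ET_bar_1Act} as an upper bound: the term $\sum_{i=1}^{p}\lambda_i\|V\|_1^2 + p$ collapses to $\|B_{uc}\|_2^2 + 1$, and the $\|\cdot\|_1$ term on a scalar becomes $|\cdot|$. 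So the $\leq$ direction is essentially just bookkeeping on Proposition \ref{prop:ET_bar}.

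The substantive part is to establish the reverse inequality, i.e., that there exists (a sequence of) uncontrolled inputs $u_{uc}\in\Uuc$ for which the total energy approaches the claimed expression. The key observation is that when $p=1$, the scalar uncontrolled input $u_{uc}(t)\in[-1,1]$ decouples cleanly: taking $u_{uc}(t) \equiv \sigma$ constant with $\sigma = \pm 1$ makes $\overline{u}_{uc} = \sigma$, $\|\overline{u}_{uc}\|_\infty = 1$, and $\|u_{uc}\|_{\mathcal L_2}^2 = t_f$, so all three bounds $T_1 = t_f\|B_{uc}\|_2^2$, $T_2 = 2|B_{uc}^{T}B_{c}^{\dagger T}B_{c}^{\dagger}(v-\tilde x)|$, and $T_3 = t_f$ are simultaneously attained by the \emph{same} choice $\sigma = \sign(B_{uc}^{T}B_{c}^{\dagger T}B_{c}^{\dagger}(\tilde x - v))$. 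Plugging this single $u_{uc}$ into $E_T = E_M^* + \|u_{uc}\|_{\mathcal L_2}^2$ using the malfunctioning-energy expression \eqref{eq:EM} directly yields the right-hand side of \eqref{eq:ET_bar_1Act}; since the supremum over $\Uuc$ is at least this value and at most this value, equality holds. One should carry the computation explicitly: with $u_{uc}\equiv\sigma$, $\|B_c^\dagger(v-\tilde x - t_f\sigma B_{uc})\|_2^2 = \|B_c^\dagger(v-\tilde x)\|_2^2 - 2t_f\sigma\,(v-\tilde x)^T B_c^{\dagger T}B_c^\dagger B_{uc} + t_f^2\|B_c^\dagger B_{uc}\|_2^2$, and choosing $\sigma$ to match the sign of $(\tilde x - v)^T B_c^{\dagger T}B_c^\dagger B_{uc}$ makes the cross term equal to $+2t_f|B_{uc}^T B_c^{\dagger T}B_c^\dagger(v-\tilde x)|$; dividing by $t_f$ and adding $t_f$ from $T_3$ reproduces \eqref{eq:ET_bar_1Act} exactly.

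I expect the only real subtlety — and the place to be careful — is the relationship between $\overline{u}_{uc}$ and $u_{uc}$: the malfunctioning energy \eqref{eq:EM} depends on $u_{uc}$ only through its mean $\overline{u}_{uc}$, whereas the term $\|u_{uc}\|_{\mathcal L_2}^2$ depends on the full time profile. The argument must therefore check that the choice of $u_{uc}$ that maximizes the mean-dependent part (a boundary value of $\overline u_{uc}$) is \emph{consistent} with the choice that maximizes $\|u_{uc}\|_{\mathcal L_2}^2$ (pointwise saturation $|u_{uc}(t)|=1$). When $p=1$ these coincide because the only way to achieve $\overline u_{uc}=\pm 1$ with $\|u_{uc}(t)\|_\infty\le 1$ is $u_{uc}(t)\equiv\pm 1$, so no tension arises; this is exactly why a closed-form expression is available for $p=1$ but only an upper bound for general $p$. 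The proof should make this point explicit, perhaps by noting that $|\overline{u}_{uc}| = 1$ forces $u_{uc}(t) \equiv \sign(\overline u_{uc})$ almost everywhere, and then the supremum in \eqref{eq:ET_bar_def} is attained (not merely approached) at this input, completing the argument.
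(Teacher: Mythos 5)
Your overall strategy --- specialize the bound of Proposition \ref{prop:ET_bar} to $p=1$ for the upper bound, then exhibit a constant saturating input $u_{uc}\equiv\sigma$ attaining it --- is sound in structure and close in spirit to the paper's proof: the paper likewise observes that the single choice $\overline{u}_{uc}^{*}=\pm 1$ simultaneously maximizes all three terms of the supremum, and your remark that $|\overline{u}_{uc}|=1$ together with $\|u_{uc}(t)\|_{\infty}\le 1$ forces $u_{uc}(t)\equiv\pm 1$ almost everywhere is exactly the right justification (stated more loosely in the paper) for why the mean-dependent and $\mathcal{L}_2$-dependent parts can be maximized consistently when $p=1$.

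However, your explicit verification does not close. Expanding \eqref{eq:EM} with $u_{uc}\equiv\sigma$ gives, after dividing by $t_f$, a quadratic term $t_f\sigma^{2}B_{uc}^{T}B_{c}^{\dagger T}B_{c}^{\dagger}B_{uc}=t_f\|B_{c}^{\dagger}B_{uc}\|_{2}^{2}$ --- you write this correctly as $t_f^{2}\|B_{c}^{\dagger}B_{uc}\|_{2}^{2}$ before dividing --- whereas the right-hand side of \eqref{eq:ET_bar_1Act}, and the upper bound you obtain by specializing Proposition \ref{prop:ET_bar}, contain $t_f\|B_{uc}\|_{2}^{2}$. These coincide only when $\|B_{c}^{\dagger}B_{uc}\|_{2}=\|B_{uc}\|_{2}$, which does not hold in general, so the assertion that the computation ``reproduces \eqref{eq:ET_bar_1Act} exactly'' is unjustified: the value you actually attain is $\frac{1}{t_f}\|B_{c}^{\dagger}(v-\tilde{x})\|_{2}^{2}+2|B_{uc}^{T}B_{c}^{\dagger T}B_{c}^{\dagger}(v-\tilde{x})|+t_f\left(\|B_{c}^{\dagger}B_{uc}\|_{2}^{2}+1\right)$, and your two halves therefore sandwich $\overline{E}_T$ between two different quantities rather than pinning it to \eqref{eq:ET_bar_1Act}. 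The paper's proof does not surface this tension because it writes the quadratic term inside the supremum of \eqref{eq:ET2} as $t_fB_{uc}^{T}B_{uc}\overline{u}_{uc}^{2}$ from the outset and optimizes term by term; your more careful expansion of \eqref{eq:EM} is precisely where the mismatch becomes visible. To complete your argument you must either justify replacing $\|B_{c}^{\dagger}B_{uc}\|_{2}^{2}$ by $\|B_{uc}\|_{2}^{2}$ or restate the attained value accordingly; as written, the claimed equality is not established.
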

\begin{proof}
From the definition \eqref{eq:ET_bar_def} and the expression for the malfunctioning energy \eqref{eq:EM},
\begin{align}
\overline{E}_T(x_0, x_{tg}, t_f) = \frac{1}{t_f}\left\|B_{c}^{\dagger}(v-\tilde{x})\right\|_{2}^{2} + \sup_{u_{uc} \in \Uuc} \biggl\{&t_fB_{uc}^{T}B_{uc} \overline{u}_{uc}^{2} + 2(v-\tilde{x})^{T}B_{c}^{\dagger T}B_{c}^{\dagger}B_{uc}\overline{u}_{uc}
+ \int_{0}^{t_f} u_{uc}^{2}(t)\mathrm{d}t\biggr\} \label{eq:ET2}
\end{align}
The constraint $u_{uc} \in \Uuc$ is equivalent to the constraints $-1 \leq \overline{u}_{uc} \leq 1$ on $\overline{u}_{uc}$. Then, the second term inside the supremum reduces to
$$
\sup_{-1 \leq \overline{u}_{uc} \leq 1} 2(v-\tilde{x})^{T}B_{c}^{\dagger T}B_{c}^{\dagger}B_{uc}\overline{u}_{uc} = 2\left|B_{uc}^{T}B_{c}^{\dagger T}B_{c}^{\dagger}(v-\tilde{x})\right|,
$$%
with the optimal $\overline{u}_{uc}^{*}$ for this particular term given by $\overline{u}_{uc}^{*} = \sign\left(B_{uc}^{T}B_{c}^{\dagger T}B_{c}^{\dagger}(v-\tilde{x})\right)$. When the outcome of the $\sign$ function is nonzero, this choice of $\overline{u}_{uc}^{*} \in \{-1,+1\}$ also maximizes the first term inside the supremum in \eqref{eq:ET2}, with $\sup_{-1 \leq \overline{u}_{uc} \leq 1} t_fB_{uc}^{T}B_{uc} \overline{u}_{uc}^{2} = t_f \|B_{uc}\|_{2}^{2}$. Further, $\overline{u}_{uc}^{*} \in \{-1, +1\}$ implies that the uncontrolled input $u_{uc}^{*}(t)$ also takes on a constant value of $-1$ or $+1$ at each instant $t \in [0, t_f]$. This value for $u_{uc}^{*}$ maximizes the third term inside the supremum in \eqref{eq:ET2}, with $\sup_{u_{uc} \in \Uuc} \int_{0}^{t_f} u_{uc}^{2}(t)\mathrm{d}t = t_f$.

{On the other hand, if $B_{uc}^{T}B_{c}^{\dagger T}B_{c}^{\dagger}(v-\tilde{x}) = 0$, the second term inside the supremum in \eqref{eq:ET2} vanishes, leading to a smaller value for $\overline{E}_T(x_0, x_{tg}, t_f)$ than if this term did not vanish. The worst-case total energy is thus achieved only when $B_{uc}^{T}B_{c}^{\dagger T}B_{c}^{\dagger}(v-\tilde{x}) \neq 0$. The uncontrolled input $u_{uc}^{*}$ maximizing each of the three terms inside the supremum in \eqref{eq:ET2} is the same, and is a constant equal to $-1$ or $+1$ at almost every instant $t \in [0, t_f]$. Combining the results of each term above, we obtain the result \eqref{eq:ET_bar_1Act}.}
\end{proof}

\begin{remark}
We remind the reader that the primary results of this section rely on approximations made on the basis of Lemma \ref{lem:approx}. Appendix \ref{app:B} provides a classification of all non-approximate and approximate quantities derived in this paper. Further, the expressions for the different energies in \eqref{eq:EN}, \eqref{eq:EM} and \eqref{eq:ET_bar} depend on $v \equiv v(t_f, u)$, with $\|v\|_{\infty} \leq \overline{v}$. These expressions play a key role in determining bounds on the energetic resilience metric \eqref{eq:rA_def}, as shown in the next section. 
\end{remark}

\section{Resilience of Nonlinear Systems} \label{sec:Resilience}
{In this section, we quantify the maximal additional energy used by the malfunctioning system \eqref{eq:Malfunctioning}, i.e., addressing Problem \ref{prob:resilience}, for the particular case of loss of control authority over one actuator. Namely, we derive a bound on the energetic resilience metric \eqref{eq:rA_def}, assuming the approximation for the nominal \eqref{eq:EN} and worst-case total \eqref{eq:ET_bar_1Act} energies are exact. We also discuss the reduction to the special case of linear driftless dynamics in \eqref{eq:Nominal} and recover an existing result from \cite{PO25a} in this case. We note that the more general case where authority is lost over $p>1$ actuators can be addressed using \eqref{eq:ET_bar}. However, this bound is more conservative as \eqref{eq:ET_bar} provides only an upper bound on the approximate worst-case total energy.}

First, given $\tilde{x} = x_0 - x_{tg}$, we rewrite the definition \eqref{eq:rA_def} as
\begin{align*}
r_A(t_f, R) &= \sup_{\|\tilde{x}\|_2 \leq R} \overline{E}_T(x_0,x_{tg},t_f) - E_{N}^{*}(x_0, x_{tg}, t_f).
\end{align*}
Next, note that the nominal \eqref{eq:EN} and worst-case total \eqref{eq:ET_bar} energies depend only on $v-\tilde{x}$, and not directly on $\tilde{x}$ itself. Then, we can use the bounds $\|\tilde{x}\|_2 \leq R$ and $\|v\|_{\infty} \leq \overline{v}$ to obtain
$$
\|v-\tilde{x}\|_2 \leq \|v\|_2 + \|\tilde{x}\|_2 \leq \sqrt{n}\|v\|_{\infty} + \|\tilde{x}\|_2 \leq R + \overline{v}\sqrt{n},
$$%
where we note that the norm inequality $\|v\|_2 \leq \sqrt{n}\|v\|_{\infty}$ holds for $v \in \mathbb{R}^n$ \cite[Chapter 5]{Horn}. {We then replace the constraint $\|\tilde{x}\|_2 \leq R$ with $\|v-\tilde{x}\|_2 \leq R+\overline{v}\sqrt{n}$. Due to the norm inequality above, the set $\{d:\|\tilde{x}\|_2 \leq R\}$ is a subset of the set $\{d: \|v-\tilde{x}\|_2 \leq R + \overline{v}\sqrt{n} \text{ where } \|v\|_{\infty} \leq \overline{v}$\}. This property results in the following upper bound on $r_A(t_f, R)$:}
\begin{equation} \label{eq:rA_vd}
r_A(t_f, R) \leq \sup_{\|v-\tilde{x}\|_2 \leq R+\overline{v}\sqrt{n}} \overline{E}_T(x_0,x_{tg},t_f) - E_{N}^{*}(x_0, x_{tg}, t_f).
\end{equation}
We note that this analysis demonstrates why the metric used in \cite{PO25a} is unsuitable for nonlinear systems. Briefly, the metric in \cite{PO25a} uses a constraint of the form $\|\tilde{x}\|_2 \geq R$ rather than $\|\tilde{x}\|_2 \leq R$ in \eqref{eq:rA_def}. As before, since \eqref{eq:EN} and \eqref{eq:ET_bar} depend only on $v-\tilde{x}$, we would require a lower bound on $\|v-\tilde{x}\|_2$ to replace $\|\tilde{x}\|_2 \geq R$. This lower bound can be derived as $\|v-\tilde{x}\|_2 \geq R-\overline{v}\sqrt{n}$, but is meaningful only when $R > \overline{v}\sqrt{n}$. However, $\overline{v}$ in \eqref{eq:v_bound} can take on extremely large values, thus preventing $R > \overline{v}\sqrt{n}$ from being satisfied. Thus, we have introduced a different metric \eqref{eq:rA_def} in this paper for nonlinear systems.

We can then use \eqref{eq:rA_vd} to obtain the following bound on the resilience metric.

\begin{proposition} \label{prop:rA_gen}
The energetic resilience $r_A(t_f, R)$ is bounded from above as follows:
\begin{equation} \label{eq:rA_gen}
r_A(t_f, R) \leq t_f\Bigl(\sum_{i = 1}^{p} \lambda_i \|V\|_{1}^{2} + p\Bigr) + \frac{(R+\overline{v}\sqrt{n})^2}{t_f} \lambda_{\max}\left(B_{c}^{\dagger T}B_{c}^{\dagger} - B^{\dagger T}B^{\dagger}\right) + 2n(R+\overline{v}\sqrt{n}) \left\|B_{uc}^{T}B_{c}^{\dagger T}B_{c}^{\dagger}\right\|_2.
\end{equation}
\end{proposition}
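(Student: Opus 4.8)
The plan is to reduce the claim to the single‑variable optimisation already prepared in \eqref{eq:rA_vd} and then bound the resulting functional term by term. Concretely, I would substitute the approximate nominal energy \eqref{eq:EN} and the upper bound \eqref{eq:ET_bar} on the worst‑case total energy into the right‑hand side of \eqref{eq:rA_vd}. Since, as noted before \eqref{eq:rA_vd}, both energies depend only on the combination $v-\tilde{x}$, abbreviating $d \coloneqq v-\tilde{x}$ the quantity being maximised becomes
\[
\frac{1}{t_f}\bigl\|B_{c}^{\dagger}d\bigr\|_{2}^{2} - \frac{1}{t_f}\bigl\|B^{\dagger}d\bigr\|_{2}^{2} + 2\bigl\|B_{uc}^{T}B_{c}^{\dagger T}B_{c}^{\dagger}d\bigr\|_{1} + t_f\Bigl(\textstyle\sum_{i=1}^{p}\lambda_i\|V\|_{1}^{2} + p\Bigr),
\]
a sum of a quadratic form in $d$, an $\ell_1$-type term, and a $d$-independent constant, to be maximised over $\|d\|_2 \le R + \overline{v}\sqrt{n}$. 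Because the supremum of a sum is at most the sum of the suprema, it suffices to bound each piece over this ball independently and add them; the constant passes through unchanged and already matches the first group of terms in \eqref{eq:rA_gen}.

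For the quadratic part I would combine the two squared norms into the single quadratic form $\tfrac{1}{t_f}d^{T}\bigl(B_{c}^{\dagger T}B_{c}^{\dagger} - B^{\dagger T}B^{\dagger}\bigr)d$. The matrix here is symmetric, so the Rayleigh inequality gives the pointwise bound $\tfrac{1}{t_f}\lambda_{\max}\bigl(B_{c}^{\dagger T}B_{c}^{\dagger} - B^{\dagger T}B^{\dagger}\bigr)\|d\|_{2}^{2}$, and then $\|d\|_2 \le R + \overline{v}\sqrt{n}$ yields exactly the middle term of \eqref{eq:rA_gen}. For the $\ell_1$ term I would pass to the $2$-norm via the standard norm-equivalence inequality, use the sub-multiplicative property to peel off $\|B_{uc}^{T}B_{c}^{\dagger T}B_{c}^{\dagger}\|_2$, and again invoke $\|d\|_2 \le R + \overline{v}\sqrt{n}$, producing the final term $2n(R+\overline{v}\sqrt{n})\|B_{uc}^{T}B_{c}^{\dagger T}B_{c}^{\dagger}\|_2$. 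Adding the three contributions gives \eqref{eq:rA_gen}. Structurally this is the same argument as in the proof of Proposition \ref{prop:rA_Dft}, now with $\tilde{x}$ replaced by $v-\tilde{x}$ and the radius $R$ by $R + \overline{v}\sqrt{n}$.

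The argument is largely mechanical once \eqref{eq:rA_vd}, \eqref{eq:EN} and \eqref{eq:ET_bar} are in hand, so the obstacles are minor but worth flagging. First, bounding the supremum of the sum by the sum of the individual suprema introduces conservatism, as the three pieces are generally not extremised at the same $d$; this is acceptable since the proposition only asserts an upper bound. Second, the Rayleigh step gives the stated term only when $\lambda_{\max}\bigl(B_{c}^{\dagger T}B_{c}^{\dagger} - B^{\dagger T}B^{\dagger}\bigr) \ge 0$, which is the expected case — removing an actuator should not lower the minimal energy — and is handled exactly as in Proposition \ref{prop:rA_Dft}. The fussiest point is simply tracking the dimensional factor in the norm conversions for the $\ell_1$ term so that the constant matches the $n$ appearing in \eqref{eq:rA_gen}; this is bookkeeping rather than a genuine difficulty.
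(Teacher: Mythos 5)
Your proposal is correct and follows essentially the same route as the paper's proof: substitute \eqref{eq:EN} and \eqref{eq:ET_bar} into \eqref{eq:rA_vd}, split the supremum over the constant, quadratic, and $\ell_1$ pieces, apply the Rayleigh inequality to the symmetric matrix $B_{c}^{\dagger T}B_{c}^{\dagger} - B^{\dagger T}B^{\dagger}$, and handle the $\ell_1$ term via norm equivalence and sub-multiplicativity over the ball $\|v-\tilde{x}\|_2 \leq R+\overline{v}\sqrt{n}$. Your side remark that the Rayleigh step requires $\lambda_{\max}\left(B_{c}^{\dagger T}B_{c}^{\dagger} - B^{\dagger T}B^{\dagger}\right) \geq 0$ is a point the paper does not state explicitly, but it holds since that matrix is positive semidefinite (the least-norm solution using only the controlled columns cannot have smaller norm than the one using all columns).
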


\begin{proof}
Using \eqref{eq:EN} and \eqref{eq:ET_bar} in \eqref{eq:rA_vd},
\begin{align*}
r_A(t_f, R) &\leq \sup_{\|v-\tilde{x}\|_2 \leq R+\overline{v}\sqrt{n}} \overline{E}_T(x_0,x_{tg},t_f) - E_{N}^{*}(x_0, x_{tg}, t_f) \\
&\leq t_f\Bigl(\sum_{i = 1}^{p} \lambda_i \|V\|_{1}^{2} + p\Bigr) + \sup_{\|v-\tilde{x}\|_2 \leq R+\overline{v}\sqrt{n}} \frac{1}{t_f}(v-\tilde{x})^{T}\left(B_{c}^{\dagger T}B_{c}^{\dagger} - B^{\dagger T}B^{\dagger}\right)(v-\tilde{x}) \\
&\hspace{9.7em} + \sup_{\|v-\tilde{x}\|_2 \leq R+\overline{v}\sqrt{n}} 2\left\|B_{uc}^{T}B_{c}^{\dagger T}B_{c}^{\dagger}(v-\tilde{x})\right\|_{1} \\
&\leq t_f\Bigl(\sum_{i = 1}^{p} \lambda_i \|V\|_{1}^{2} + p\Bigr) + 2n(R+\overline{v}\sqrt{n}) \left\|B_{uc}^{T}B_{c}^{\dagger T}B_{c}^{\dagger}\right\|_2 \\
&\hspace{9.7em}+ \sup_{\|v-\tilde{x}\|_2 \leq R+\overline{v}\sqrt{n}} \frac{1}{t_f}(v-\tilde{x})^{T}\left(B_{c}^{\dagger T}B_{c}^{\dagger} - B^{\dagger T}B^{\dagger}\right)(v-\tilde{x}),
\end{align*}
where the second term above follows from the norm inequality $\|z\|_1 \leq \sqrt{n}\|z\|_2$ for any vector $z \in \mathbb{R}^n$. Since $B_{c}^{\dagger T}B_{c}^{\dagger} - B^{\dagger T}B^{\dagger}$ is symmetric, we can use the Rayleigh inequality \cite{Horn} and the constraint $\|v-\tilde{x}\|_2 \leq R+\overline{v}\sqrt{n}$ to bound the third term as
$$
\sup_{\|v-\tilde{x}\|_2 \leq R+\overline{v}\sqrt{n}} \frac{1}{t_f}(v-\tilde{x})^{T}\left(B_{c}^{\dagger T}B_{c}^{\dagger} - B^{\dagger T}B^{\dagger}\right)(v-\tilde{x}) \leq \frac{(R+\overline{v}\sqrt{n})^2}{t_f} \lambda_{\max}\left(B_{c}^{\dagger T}B_{c}^{\dagger} - B^{\dagger T}B^{\dagger}\right),
$$
from which the result \eqref{eq:rA_gen} follows.
\end{proof}

We also specialize this result to the case when $p = 1$ in the following corollary.

\begin{corollary} \label{prop:rA}
When $p = 1$, the energetic resilience $r_A(t_f, R)$ is bounded from above as follows:
\begin{align}
r_A(t_f, R) \leq \frac{(R+\overline{v}\sqrt{n})^2}{t_f} \lambda_{\max}\left(B_{c}^{\dagger T}B_{c}^{\dagger} - B^{\dagger T}B^{\dagger}\right) &+ 2(R+\overline{v}\sqrt{n}) \left\|B_{uc}^{T}B_{c}^{\dagger T}B_{c}^{\dagger}\right\|_2 + t_f\left(\|B_{uc}\|_{2}^{2} + 1\right). \label{eq:rA}
\end{align}
\end{corollary}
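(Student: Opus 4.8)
The plan is to mirror the proof of Proposition \ref{prop:rA_gen}, but to exploit the \emph{closed-form} expression \eqref{eq:ET_bar_1Act} for the worst-case total energy that becomes available when $p=1$, rather than the upper bound \eqref{eq:ET_bar}; this yields a sharper constant on the cross term. Concretely, I would start from \eqref{eq:rA_vd}, which has already relaxed the constraint $\|\tilde{x}\|_2 \le R$ to $\|v-\tilde{x}\|_2 \le R+\overline{v}\sqrt{n}$, and substitute \eqref{eq:ET_bar_1Act} for $\overline{E}_T$ and \eqref{eq:EN} for $E_N^{*}$. The supremand then splits into three pieces: (i) the constant term $t_f(\|B_{uc}\|_2^2+1)$, which is independent of $v-\tilde{x}$ and can be pulled outside the supremum unchanged; (ii) the cross term $2\,|B_{uc}^{T}B_{c}^{\dagger T}B_{c}^{\dagger}(v-\tilde{x})|$; and (iii) the difference of quadratics $\tfrac{1}{t_f}\|B_c^{\dagger}(v-\tilde{x})\|_2^2 - \tfrac{1}{t_f}\|B^{\dagger}(v-\tilde{x})\|_2^2$.

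For piece (ii), since $p=1$ this is a genuine scalar absolute value rather than the $1$-norm of a $p$-vector, so the Cauchy--Schwarz inequality gives $|B_{uc}^{T}B_{c}^{\dagger T}B_{c}^{\dagger}(v-\tilde{x})| \le \|B_{uc}^{T}B_{c}^{\dagger T}B_{c}^{\dagger}\|_2\,\|v-\tilde{x}\|_2 \le (R+\overline{v}\sqrt{n})\,\|B_{uc}^{T}B_{c}^{\dagger T}B_{c}^{\dagger}\|_2$, with no dimension-dependent factor --- this is exactly what makes \eqref{eq:rA} tighter than what one obtains by substituting $p=1$ into \eqref{eq:rA_gen}. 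For piece (iii), I would rewrite it as $\tfrac{1}{t_f}(v-\tilde{x})^{T}(B_c^{\dagger T}B_c^{\dagger}-B^{\dagger T}B^{\dagger})(v-\tilde{x})$, note that $B_c^{\dagger T}B_c^{\dagger}-B^{\dagger T}B^{\dagger}$ is symmetric, and apply the Rayleigh inequality together with $\|v-\tilde{x}\|_2 \le R+\overline{v}\sqrt{n}$ to bound it by $\tfrac{(R+\overline{v}\sqrt{n})^2}{t_f}\lambda_{\max}(B_c^{\dagger T}B_c^{\dagger}-B^{\dagger T}B^{\dagger})$. Summing the three bounds yields \eqref{eq:rA}.

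There is no substantive obstacle: the statement is a clean specialization, and the only points needing care are (a) verifying that the constant term is genuinely independent of $v-\tilde{x}$ so it survives the supremum intact, and (b) observing the scalar nature of the cross term, which is what gives the coefficient $2$ rather than the $2n$ appearing in \eqref{eq:rA_gen}. As a consistency check, one can also recover a weaker form of the corollary by directly setting $p=1$ in Proposition \ref{prop:rA_gen}: since $B_{uc}^{T}B_{uc}=\|B_{uc}\|_2^2$ is a scalar, its spectral decomposition forces $\|V\|_1=1$ and $\lambda_1=\|B_{uc}\|_2^2$, so $\sum_{i=1}^{p}\lambda_i\|V\|_1^2+p = \|B_{uc}\|_2^2+1$, matching the last term of \eqref{eq:rA}.
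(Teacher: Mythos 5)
Your proof is correct and is exactly the derivation the paper intends (the corollary is stated without an explicit proof): you rerun the argument of Proposition~\ref{prop:rA_gen} starting from \eqref{eq:rA_vd}, but substitute the closed-form $p=1$ expression \eqref{eq:ET_bar_1Act} in place of the bound \eqref{eq:ET_bar}, apply Cauchy--Schwarz to the now-scalar cross term (yielding the coefficient $2$ rather than $2n$) and the Rayleigh inequality to the quadratic difference, and keep the constant term $t_f(\|B_{uc}\|_2^2+1)$ intact. Your consistency check that $\sum_{i=1}^{p}\lambda_i\|V\|_1^2+p$ collapses to $\|B_{uc}\|_2^2+1$ when $p=1$ is also accurate.
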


\begin{remark}[Reduction to Linear Driftless Systems] \label{rem:Dft}
{We recall that setting $f(x(t)) \equiv 0$ and $g(x(t)) \equiv B$ recovers the driftless system \eqref{eq:Nominal_Dft} from \eqref{eq:Nominal}. As discussed in Section \ref{sec:Energies}, the quantity $v(t_f, u)$ reduces to $0$ so that $\overline{v} = 0$. In this special case, equality is achieved in \eqref{eq:ENapprox}, so that \eqref{eq:EN} and \eqref{eq:EM} are no longer approximations, but hold with equality. Further, we can easily observe that the energies \eqref{eq:EN}, \eqref{eq:EM}, \eqref{eq:ET_bar} and \eqref{eq:ET_bar_1Act} reduce to the corresponding expressions \eqref{eq:EN_Dft}, \eqref{eq:EM_Dft}, \eqref{eq:ETbar_Dft} and \eqref{eq:ETbar_Dft_1Act} for the linear driftless system. The bound \eqref{eq:rA} on the energetic resilience metric also reduces to the corresponding bound \eqref{eq:rA_bound} for linear driftless dynamics. We emphasize that each of these expressions for driftless dynamics hold without the need for approximations.}
\end{remark}


Finally, it is useful to investigate whether the bounds on the energetic resilience are tight. We now demonstrate that these bounds are achieved for the class of linear driftless systems with one state, losing authority over one input.

\begin{proposition} \label{prop:achieve}
The bound \eqref{eq:rA_gen} on the energetic resilience is exactly achieved for linear driftless systems of the form $\dot{x}(t) = b_cu_c(t) + b_{uc}u_{uc}(t)$, where all quantities are scalars.
\end{proposition}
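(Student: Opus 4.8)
The plan is to specialize every object in the bound \eqref{eq:rA_gen} to the scalar setting, evaluate $r_A(t_f,R)$ directly from its definition \eqref{eq:rA_def}, and check that the two coincide. First I would set $n=m=p=1$, $f\equiv 0$, and $g\equiv B=[b_c,\ b_{uc}]$, so that $D_f=D_g=0$ and $f_0=0$. By Proposition \ref{prop:v} this forces $\overline v=0$ and $v\equiv 0$, and, as recorded in Remark \ref{rem:Dft}, the approximations \eqref{eq:EN} and \eqref{eq:ET_bar_1Act} then hold with equality, so no approximation error is incurred and it is legitimate to compare $r_A$ against \eqref{eq:rA_gen} (which for $p=1$ is the same as \eqref{eq:rA}).

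Next I would evaluate the matrix quantities appearing in \eqref{eq:rA_gen}, all of which collapse to scalars: $B_c^{\dagger}=1/b_c$, $B^{\dagger}=(b_c^2+b_{uc}^2)^{-1}[b_c;\,b_{uc}]$, hence $B_c^{\dagger T}B_c^{\dagger}=1/b_c^2$ and $B^{\dagger T}B^{\dagger}=1/(b_c^2+b_{uc}^2)$; moreover $B_{uc}^{T}B_{uc}=b_{uc}^2$, so $\lambda_1=b_{uc}^2$, $V=1$, $\|V\|_1=1$. Substituting these together with $\overline v=0$, $n=p=1$ into the right-hand side of \eqref{eq:rA_gen} yields the explicit scalar value
\[
t_f\bigl(b_{uc}^2+1\bigr)+\frac{R^2}{t_f}\cdot\frac{b_{uc}^2}{b_c^2\,(b_c^2+b_{uc}^2)}+\frac{2R\,|b_{uc}|}{b_c^2}.
\]

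Then I would compute $r_A(t_f,R)$ from scratch. By \eqref{eq:EN} (equivalently \eqref{eq:EN_Dft}) the nominal energy is $E_N^{*}=\tilde x^2/\bigl(t_f(b_c^2+b_{uc}^2)\bigr)$, and by \eqref{eq:ET_bar_1Act} (equivalently \eqref{eq:ETbar_Dft_1Act}) the worst-case total energy is $\overline E_T=\tilde x^2/(t_f b_c^2)+t_f(b_{uc}^2+1)+2|b_{uc}||\tilde x|/b_c^2$, where $\tilde x=x_0-x_{tg}$. Subtracting gives $\overline E_T-E_N^{*}=\frac{\tilde x^2}{t_f}\cdot\frac{b_{uc}^2}{b_c^2(b_c^2+b_{uc}^2)}+\frac{2|b_{uc}||\tilde x|}{b_c^2}+t_f(b_{uc}^2+1)$. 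Since the coefficients of $\tilde x^2$ and $|\tilde x|$ are both non-negative, the supremum over $|\tilde x|\le R$ in \eqref{eq:rA_def} is attained at $|\tilde x|=R$, which reproduces exactly the value displayed above; hence $r_A(t_f,R)$ equals the bound \eqref{eq:rA_gen}.

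The algebra itself is routine; the one point that genuinely needs care is that the supremum in \eqref{eq:rA_gen} is attained with no slack. In deriving \eqref{eq:rA_gen} the supremum is split across three terms and several intermediate inequalities are invoked — $\|z\|_1\le\sqrt n\,\|z\|_2$, the Rayleigh bound, and the replacement of $\|\tilde x\|_2\le R$ by $\|v-\tilde x\|_2\le R+\overline v\sqrt n$ — any of which could be strict in general. For the scalar driftless system each of these becomes an equality: $n=1$ removes the norm-equivalence gap, $\overline v=0$ removes the shift, and the Rayleigh inequality is an identity for a $1\times1$ matrix. Crucially, the single choice $\tilde x=\pm R$ simultaneously maximizes all three terms because every term is non-decreasing in $|\tilde x|$, so the split supremum is tight. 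Making this simultaneity explicit is the only non-mechanical step, and I would state it plainly to conclude that no conservatism survives.
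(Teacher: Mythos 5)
Your proposal is correct and follows essentially the same route as the paper's proof: specialize the exact (non-approximate) driftless expressions for $E_N^*$ and $\overline{E}_T$ to the scalar case with $\overline{v}=0$, observe that the supremum over $\|\tilde{x}\|_2\le R$ is attained at $|\tilde{x}|=R$, and match the result against the evaluated right-hand side of \eqref{eq:rA_gen}. Your version is somewhat more explicit than the paper's (which ends with ``it is clearly seen'') in evaluating each matrix quantity and in noting that every intermediate inequality in the derivation of \eqref{eq:rA_gen} collapses to an equality when $n=p=1$ and $\overline{v}=0$, but this is the same argument, not a different one.
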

\begin{proof}
Note that in the special case of linear driftless systems, as per Remark~\ref{rem:Dft}, we recover all expressions derived in Section~\ref{sec:Res_Dft}. Further, we know that $\overline{v} = 0$ in this case. Let $B = [b_c ~~ b_{uc}] \in \mathbb{R}^{1\times 2}$. Then,
$$
E_{N}^{*}(x_0, x_{tg}, t_f) = \frac{1}{t_f} \|B^{\dagger}\tilde{x}\|_{2}^{2} = \frac{\tilde{x}^2}{t_f}\|B^{\dagger}\|_{2}^{2}
$$
and
$$
\overline{E}_T(x_0, x_{tg}, t_f) = \frac{\tilde{x}^{2}}{t_fb_{c}^{2}} + t_f(1+b_{uc}^2) + 2\left|\frac{b_{uc}\tilde{x}}{b_{c}^{2}}\right|.
$$%
We note that these expressions are \emph{exact} and not approximate, since the dynamics are linear and driftless. We also note that this value of $\overline{E}_T(x_0, x_{tg}, t_f)$ is achieved by the constant uncontrolled input
$$
u_{uc}^*(t) = \sign\left(\frac{b_{uc}\tilde{x}}{b_{c}^{2}}\right)
$$
as shown in \cite{PO25a}. Substituting these into the definition of the metric,
\begin{align*}
r_A(t_f, R) &= t_f(1+b_{uc}^{2}) + \sup_{|\tilde{x}|\leq R} \frac{\tilde{x}^2}{t_f}\left(\|B^{\dagger}\|_{2}^{2} - \frac{1}{b_{c}^{2}}\right) + 2\left|\frac{b_{uc}\tilde{x}}{b_{c}^{2}}\right| \\
&= t_f(1+b_{uc}^{2}) + \frac{R^2}{t_f}\left(\|B^{\dagger}\|_{2}^{2} - \frac{1}{b_{c}^{2}}\right) + 2R\left|\frac{b_{uc}}{b_{c}^{2}}\right|.
\end{align*}
It is clearly seen that this is simply the upper bound (34) with $\overline{v} = 0$ since the system has linear driftless dynamics. The constant uncontrolled input above thus exactly achieves the upper bound in this case.
\end{proof}

{The bound in \eqref{eq:rA} thus quantifies the maximal additional energy needed for a nonlinear system which loses authority over one actuator, where we recall the approximations for the nominal \eqref{eq:EN} and worst-case total \eqref{eq:ET_bar_1Act} energies. We also remind the reader that Appendix \ref{app:B} provides a classification of non-approximate and approximate quantities derived in this paper. While the approximations may introduce an error in quantifying the actual energy used, we note that obtaining exact expressions for the energies \eqref{eq:EN_def} and \eqref{eq:EM_def}, and subsequently the metric \eqref{eq:rA_def} is difficult due to the nonlinear nature of the system dynamics. We also note that a potential limitation of this metric is that the bound \eqref{eq:rA} may be conservative, and overestimate the maximal energy required. This may lead to conservative capacity design in practical systems, where resources such as fuel economy are important considerations. However, in the following section, we present three examples demonstrating the use of the energetic resilience metric in characterizing the increased energy required without significant conservatism, in the case of driftless, linear and nonlinear dynamical systems.

\section{Simulation Examples} \label{sec:Examples}

We now present three examples demonstrating the accuracy of the bounds \eqref{eq:rA_bound} and \eqref{eq:rA} on the energetic resilience. In all examples, we show that these bounds are useful in characterizing the maximal additional energy used by a system \eqref{eq:Malfunctioning} to achieve a target state, while compensating for the effect of uncontrolled inputs $u_{uc}$.

\subsection{Example 1: Linear Driftless Model} Our first example is based on the driftless model of an underwater robot, also considered in \cite{PO25a}:
\begin{equation} \label{eq:Model}
\begin{bmatrix} \dot{x} \\ \dot{y} \end{bmatrix} = \begin{bmatrix} 2 & \phantom{-}1 & \phantom{-}1 \\ 0.2 & -1 & \phantom{-}1 \end{bmatrix} \begin{bmatrix} u_1 \\ u_2 \\ u_3 \end{bmatrix},
\end{equation}
where the input matrix is $B$. Here, $x$ and $y$ are the coordinates of the robot. The engine $u_1$ acts primarily along the $x$-direction, with a small bias in the $y$-direction. $u_2$ and $u_3$ are engines at a $45^{\circ}$ angle to the $x$- and $y$-directions, opposing each other. {The model in \cite{BO20} heavily biased the engine $u_1$ towards the $x$-direction, and the corresponding entry in the input matrix was $10$. We modify the entry to $2$, reducing the bias in favor of a more equitable distribution of the effect of each input.} 

\begin{figure}[!t]
	\centering
	\includegraphics[width = 0.55\textwidth]{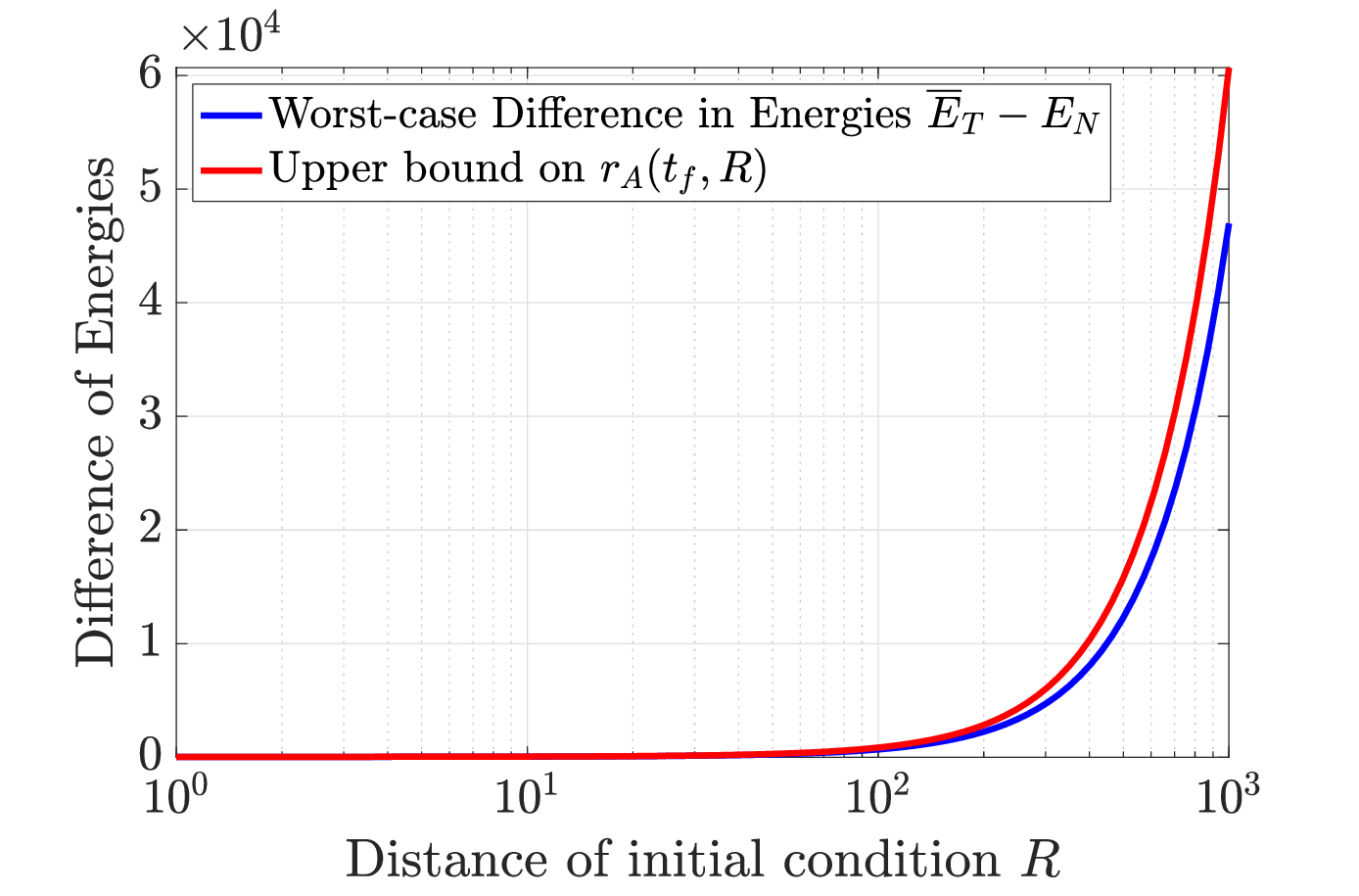}
	\caption{Illustration of energetic resilience metric as a function of distance of the initial condition from the origin for the linear driftless model of an underwater robot. The actual difference in energies $E_T-E_N$ does not vary significantly with different choices of uncontrolled inputs $u_{uc}$, and is hence not shown here.}
	\label{fig:Metric}
	\vspace{-0.4cm}
\end{figure}

We assume that a malfunction causes the system to lose authority over $u_3$ so that $B_c = \begin{bmatrix} 2 & \phantom{-}1 \\ 0.2 & -1 \end{bmatrix}$ and $B_{uc} = \begin{bmatrix} 1 \\ 1 \end{bmatrix}.$ In what follows, for notational simplicity, we denote $E_N \equiv E_{N}^{*}(x_0, x_{tg}, t_f)$, $\overline{E}_T \equiv \overline{E}_T(x_0, x_{tg}, t_f)$ and $E_T \equiv E_{T}(x_0, x_{tg}, t_f, u_{uc})$. We now compare the nominal energy $E_N$ \eqref{eq:EN_Dft} to the total energy $E_T$ defined in \eqref{eq:ET_def} and worst-case total energy $\overline{E}_M$ in \eqref{eq:ETbar_Dft_1Act}. In particular, we demonstrate the accuracy of the energetic resilience bound \eqref{eq:rA_bound} in quantifying this comparison.



We fix $t_f = 10$ and vary the distance $R$ of the initial condition $x_0$ from the origin. Figure~\ref{fig:Metric} shows the difference between the worst-case total energy $\overline{E}_{T}$ and the nominal energy $E_N$ as a function of the distance $R$ of the initial condition $x_0$ from the origin. We also plot the upper bound on $r_A(t_f, R)$ from \eqref{eq:rA_bound} as a function of $R$, and note that this is a {useful} upper bound characterizing the additional energy used by the malfunctioning system. In particular, the error between the difference $\overline{E}_T-E_N$ and the upper bound on $r_A(t_f, R)$ is very small. For instance, when $R = 10^2$, the relative error between these quantities is around $20\%$. Even as $R$ increases, the relative error is consistently under $35\%$ for the entire range of $R$ considered. This example demonstrates how $r_A(t_f, R)$ can be used to characterize the maximal additional energy required in a driftless system as considered in \eqref{eq:Model}.

{We remark that the model \eqref{eq:Model} is a minor modification of the model considered in \cite{BO20}. Since that model heavily biases the effect of $u_1$ on $x$, this metric may be relatively less useful in that case in comparison to the model in \eqref{eq:Model}. The heavier weight contributes to poor conditioning of the matrices $B^{\dagger T}B^{\dagger}$ and $B_{c}^{\dagger T} B_{c}^{\dagger}$, resulting in a more conservative bound in \eqref{eq:rA_bound} than in the example presented here.}

\subsection{Example 2: Linear Model}
In this example, we consider the ADMIRE fighter jet model \cite{SDRA}, a popular benchmark for control applications \cite{KWT10, BO22b}. The linearized dynamics $\dot{x} = Ax + Bu$ for the subsystem associated with control actions were established in \cite{SDRA}, where
\begin{equation} \label{eq:ADMIRE_linear}
A = \begin{bmatrix} -0.9967  & 0 & 0.6176 \\ 0 &-0.5057 &0 \\ -0.0939 &0 &-0.2127 \end{bmatrix} \text{ and }~
B = \begin{bmatrix}
    0 & -4.2423 & 4.2423 & 1.4871 \\ 
    1.6532 & -1.2735 & -1.2735 & 0.0024 \\
    0 & -0.2805 & 0.2805 & -0.8823
\end{bmatrix}.
\end{equation}
The state $x = [p,q,r]^T$ consists of the roll, pitch and yaw rates measured in rad/s. The control input $u = [u_c, u_{re}, u_{le}, u_r]^T$ consists of deflections of the canard wings, right and left elevons and the rudder, measured in radians. Note that these inputs correspond to deflections from their equilibrium values in the linearization. We illustrate the use of the energetic resilience metric by calculating approximate control energies based only on the deflections. 

{Under the dynamics $\dot{x} = Ax + Bu$, where $A$ and $B$ are given in \eqref{eq:ADMIRE_linear}, we have $D_f = \|A\|_{\infty} = 1.6143$ and $D_g = 0$. We consider a loss of control authority over the canard wing so that $B_{uc}$ consists of the first column of $B$ in \eqref{eq:ADMIRE_linear}. It was shown in \cite{BO22b} that the system is resilient to the loss of this control input. We fix $t_f = 1$ and vary the distance $R$ of the initial condition $x_0$ from the target state $x_{tg}$, chosen to be $[0~0~0]^T$. We remark that while we do not directly check whether achieving this target state is always possible, tools for continuous reachability analysis \cite{CORA} can be used to verify this.} 

\begin{figure}[!t]
	\centering
	\includegraphics[width = 0.55\textwidth]{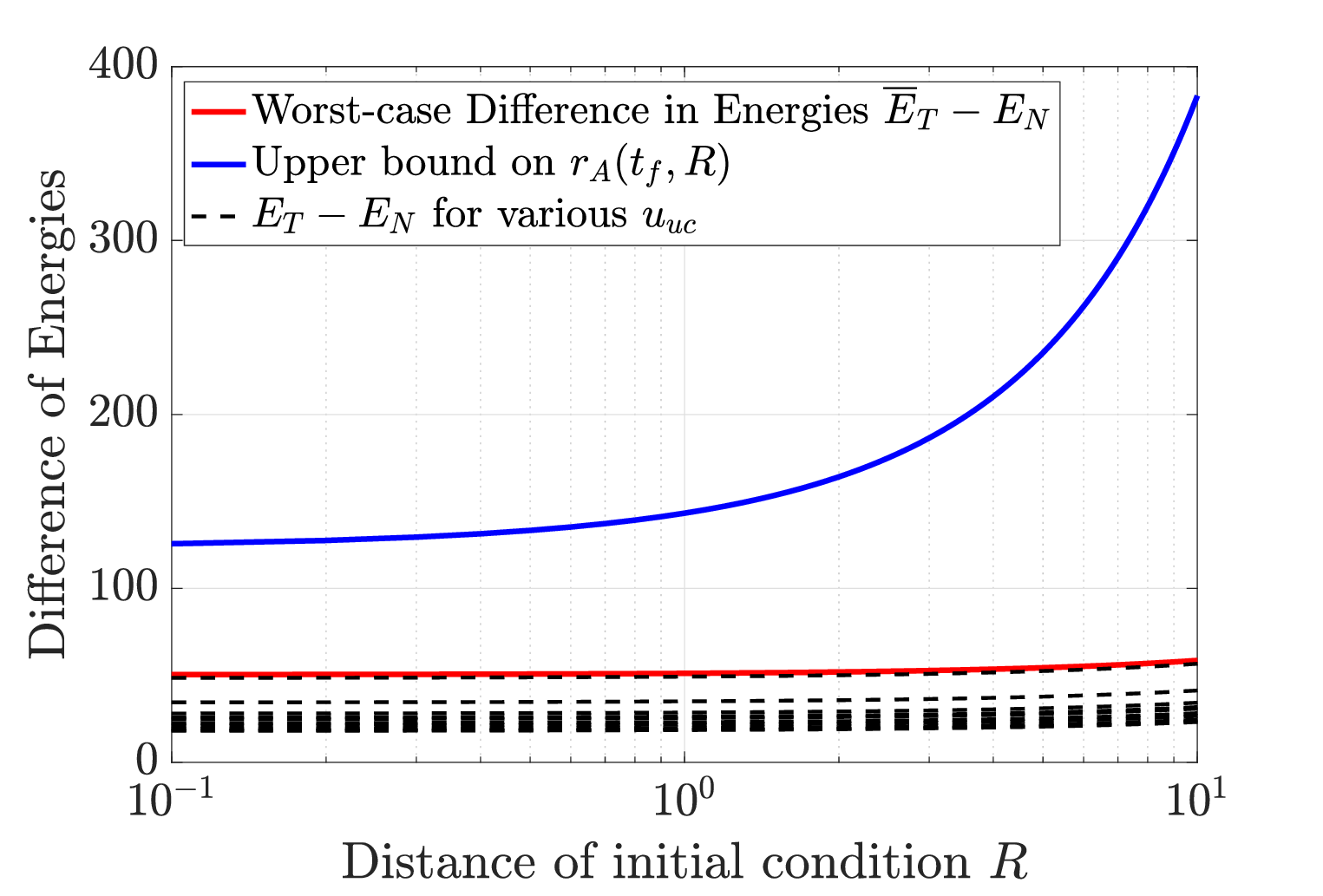}
	\caption{Difference between nominal and malfunctioning energies, compared to the upper bound on the metric $r_A(t_f, R)$, for a linear model of a fighter jet. The thinner dashed lines plot $E_T-E_N$ for a variety of uncontrolled inputs $u_{uc}$.}
	\label{fig:LinRes}
	\vspace{-0.4cm}
\end{figure}

{Fig.~\ref{fig:LinRes} compares the upper bound on $r_A(t_f,R)$ in \eqref{eq:rA} under the linear model for this example with the worst-case difference in energies $\overline{E}_T-E_N$ computed using the approximations \eqref{eq:EN} and \eqref{eq:ET_bar}. These quantities are plotted as a function of $R$, with $\|\tilde{x}\|_2 \leq R$. The thinner dashed lines in this figure show the difference between the approximate total and nominal energies $E_T-E_N$ for various uncontrolled inputs $u_{uc}$, computed using \eqref{eq:EN}, \eqref{eq:EM} and the definition \eqref{eq:ET_def}. We test uncontrolled inputs ranging sinusoids of varying frequencies, constants of different amplitudes and exponential decays.}

{It is clear that \eqref{eq:rA} bounds the difference in energies $E_T-E_N$, even under the worst-case uncontrolled input where we have the difference $\overline{E}_T-E_N$. As $R$ increases, we note that the bound on $r_A(t_f,R)$ increases faster than the difference in energies, presumably because $\overline{E}_T$, $E_T$ and $E_N$ all increase with $R$. This behavior contributes to smaller errors between the worst-case difference $\overline{E}_T-E_N$ and the bound on $r_A(t_f,R)$ when $R$ is smaller. We further note that these quantities are of the same order of magnitude for the range of $R$ considered, even when the relative error between them is large. 
Fig.~\ref{fig:LinRes} is thus an example of how the metric $r_A(t_f,R)$ can be used to estimate the maximal additional energy required to achieve a target state for a linear system.}

\subsection{Example 3: Wind-induced Nonlinear Model}
In this example, we consider the dynamics of the ADMIRE fighter jet model subjected to wind effects. The dynamic model can then be written as $\dot{x} = Ax + Bu + f_w(x)$, where $A$ and $B$ are given in \eqref{eq:ADMIRE_linear} and $f_w(x)$ is a nonlinear function of the state, modeling the effect of wind on the fighter jet model. We assume $f_w(x)$ consists of sinusoidal and constant expressions in each component as modeled in \cite{WWS24}, and that its Lipschitz constant is no more than $1$, so that $D_f = \|A\|_{\infty} + 1 = 2.6143$. In this particular instance, we choose $f_w(x) = \frac{1}{2}\left[\sin(p)\cos^2(p), ~-\sin(2q), ~1\right]^T$. {We note that when $f_w(x)$ is unknown, we can test the applicability of the metric by searching over functions with Lipschitz constants smaller than $1$.} As the dynamics are still linear in the input, we have $D_g = 0$. As before, we consider a loss of control authority over the canard.

\begin{figure}[!t]
	\centering
	\includegraphics[width = 0.55\textwidth]{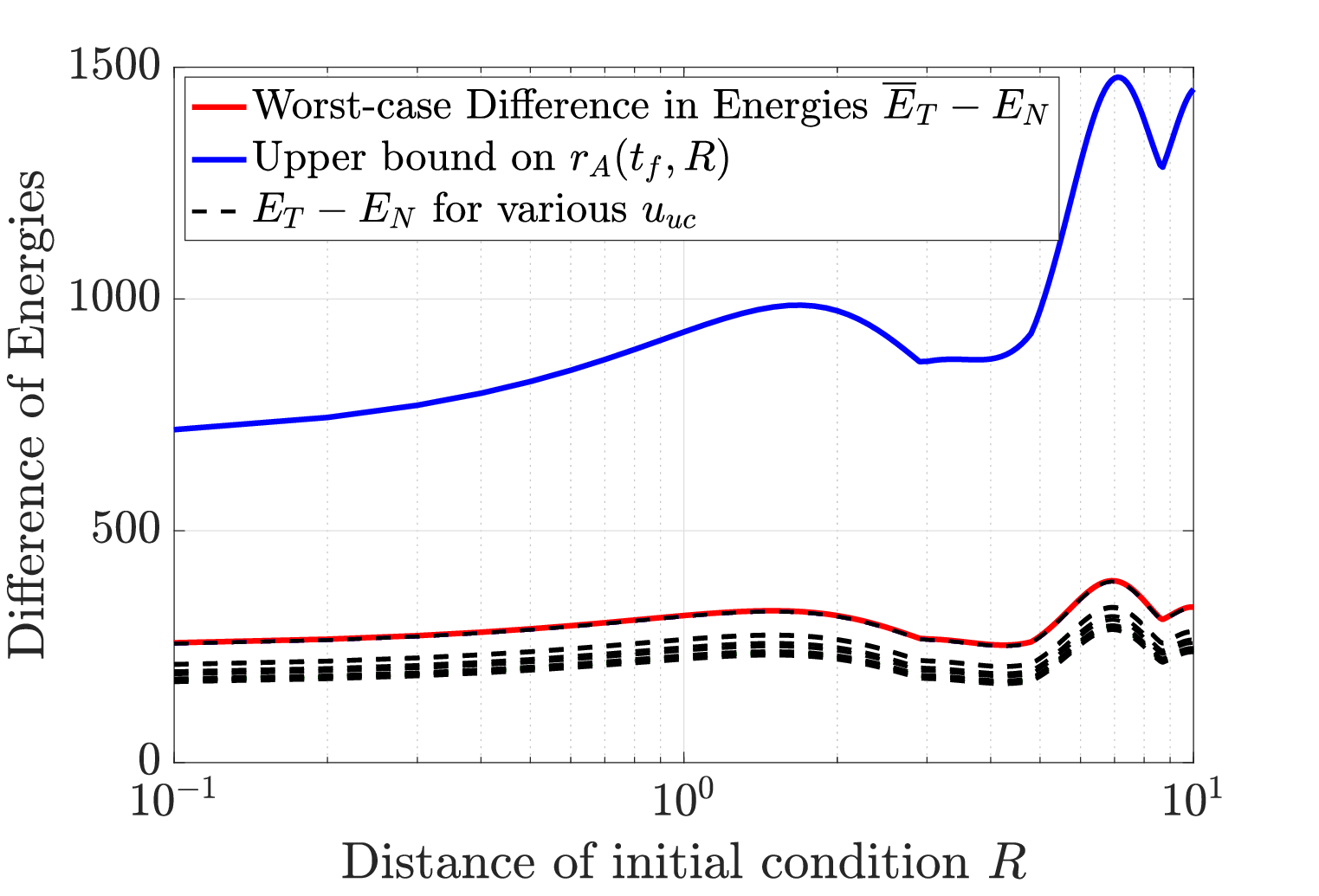}
	\caption{Difference between nominal and malfunctioning energies, compared to the upper bound on the metric $r_A(t_f, R)$, for a nonlinear wind-affected model. The thinner dashed lines plot $E_T-E_N$ for a variety of uncontrolled inputs $u_{uc}$.}
	\label{fig:NLRes}
	\vspace{-0.4cm}
\end{figure}

In Fig.~\ref{fig:NLRes}, we compare the upper bound on $r_A(t_f, R)$ in \eqref{eq:rA} to the worst-case difference in energies $\overline{E}_T - E_N$, where we use the approximations \eqref{eq:EN} and \eqref{eq:ET_bar} to compute these quantities. We plot these quantities as a function of the distance $R$ of the initial condition $x_0$ from the target state $x_{tg} = [0~0~0]^T$, and fix the final time as $t_f = 1$. As before, tools for reachability analysis can be used to verify whether the target state can be achieved under these settings. The thinner dashed lines denote the difference in energies $E_T - E_N$ for different uncontrolled inputs $u_{uc}$, computed using the approximations \eqref{eq:EN}, \eqref{eq:EM} and the definition \eqref{eq:ET_def}. As before, the uncontrolled inputs we test consist of sinusoids of varying frequencies, exponential decays and constant inputs of different amplitudes. It is clear that even under the worst-case uncontrolled input, the difference in energies $\overline{E}_T-E_N$ is bounded by \eqref{eq:rA}. As expected, the actual differences in energies $E_T-E_N$ for different uncontrolled inputs is also bounded by \eqref{eq:rA}.

{While the relative error between the worst-case difference $\overline{E}_T-E_N$ and the bound on $r_A(t_f,R)$ is large, we note that these quantities are of the same order of magnitude. This error is also smaller when $x_0$ is closer to $x_{tg}$, i.e., when $R$ is small. In this case, the bound on $r_A(t_f,R)$ is less than $3$ times larger than the worst-case difference in energies. We also note that the results for the nonlinear model in Fig.~\ref{fig:NLRes} show an improvement over the results for the linear model in Fig.~\ref{fig:LinRes}. This is due to the nonlinear function $f_w(x)$ that models wind effects. The energy required in the control signal can decrease when this function acts in the same direction that the system seeks to move towards. 

\begin{figure}[!t]
    \centering
    \includegraphics[width=0.5\textwidth]{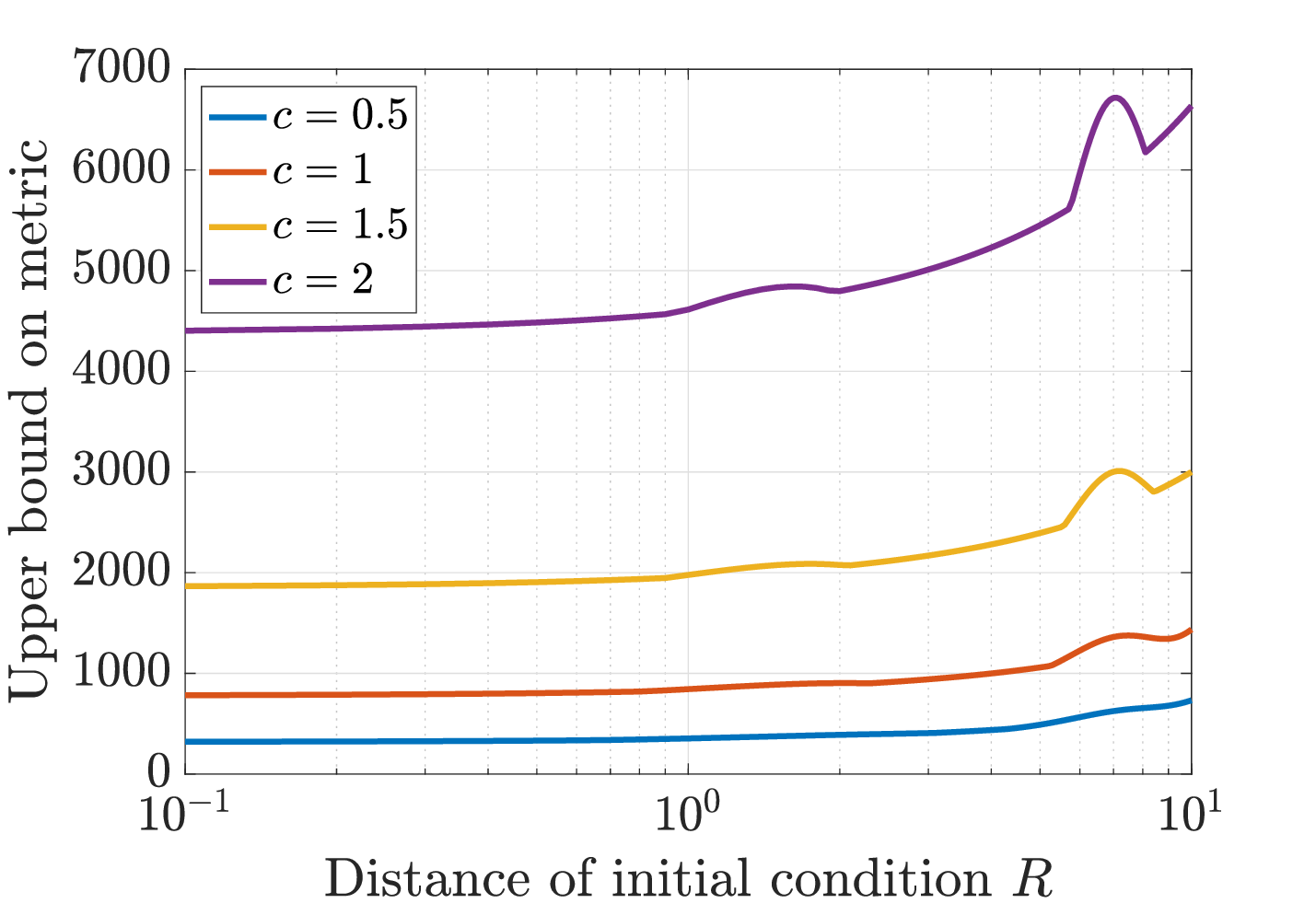}
    \caption{Sensitivity of metric to different amplitudes of wind disturbance.}
    \label{fig:parsen}
\end{figure}

In Fig.~\ref{fig:parsen}, we illustrate the sensitivity of the metric to different amplitudes of wind disturbances. In particular, we test disturbances of the form $f_w(x) = \frac{c}{2}\left[\sin(p)\cos^2(p), ~-\sin(2q), ~1\right]^T$, where $c$ is a varying parameter signifying the amplitude of disturbances. Note that $c = 1$ is the case considered in Fig.~\ref{fig:NLRes}. Clearly, the upper bound on the metric is sensitive to the choice of $c$, and for larger values of $c$, may exhibit more conservative behavior.

We remind the reader that the expressions plotted in Figs.~\ref{fig:LinRes} and \ref{fig:NLRes} are based on approximations made in \eqref{eq:EN} and \eqref{eq:EM}, resulting in approximations for $\overline{E}_T$ \eqref{eq:ET_bar} and $r_A(t_f,R)$ \eqref{eq:rA}. Under these approximations, we note that $r_A(t_f,R)$ is a useful quantity that estimates the increased energy required without significant conservatism, despite our knowledge of only its approximate upper bound.}

\section{Conclusions} \label{sec:Conclusions}
In this article, we introduce a metric to bound the maximal additional energy used by a nonlinear dynamical system in achieving a fixed-time reachability task, when it loses authority over a subset of its actuators. This maximal energy is compared to the nominal energy expended when no authority is lost over the inputs. {We first consider the special case of linear driftless systems and recall the energies used by the control signal in both the nominal and malfunctioning cases. This analysis is used to derive a bound on the metric to quantify the maximal additional energy used in the malfunctioning case, assuming control authority is lost over one actuator. For the general nonlinear case, we first derive an expression satisfied by the mean of the control signal used in the nominal and malfunctioning case. We then show that the energy in this mean value can be used to approximate the energy in the control signal in both cases. Under this approximation, we derive a bound on the worst-case malfunctioning energy over all possible malfunctioning inputs. In the case when authority is lost over only one actuator, this bound reduces to an equality under the aforementioned approximation. This expression is finally used to derive a bound on the resilience metric when authority is lost over one actuator. A set of simulation examples on driftless, linear and nonlinear dynamical systems demonstrate the applicability of this metric in characterizing the increased energy required, despite the approximations used for the control energies in the nonlinear case.}

We note that we have not numerically simulated optimal controllers for nonlinear systems, and thus do not compare the metric with the difference between the true optimal controllers for the nominal and malfunctioning systems. Such a numerical simulation is an avenue to explore in order to further demonstrate the usefulness of this metric. Another important avenue to explore is the actual design of resilient controllers. Such a resilient design must also consider that controlled and uncontrolled components may not be known in advance, and must adapt to changes in dynamics. A potential approach towards addressing this problem is presented in \cite{PAOO25} under a switched systems framework. Combining that approach with the quantification of additional energy in this paper is a useful avenue to explore. We also note that it is useful to study the case of losing authority over multiple actuators, through simulation or theory, though we may only obtain a more conservative bound in \eqref{eq:rA}.


\appendix
\section{Proof of Proposition \ref{prop:v}} \label{app:A}
Recall that
\begin{align*}
v &= -\int_{0}^{t_f} f(x(t))\mathrm{d}t - \int_{0}^{t_f} d_g(x(t),x_0)u(t)\mathrm{d}t = -t_ff_0 - \int_{0}^{t_f}d_f(x(t),x_0)\mathrm{d}t - \int_{0}^{t_f} d_g(x(t),x_0)u(t)\mathrm{d}t
\end{align*}
using \eqref{eq:f_cond}. Then,
$$
\|v\|_{\infty} \leq t_f \|f_0\|_{\infty} + \left\|\int_{0}^{t_f}d_f(x(t),x_0)\mathrm{d}t\right\|_{\infty} + \left\|\int_{0}^{t_f} d_g(x(t),x_0)u(t)\mathrm{d}t\right\|_{\infty}.
$$
Using Jensen's inequality to move the norm inside the integral, and subsequently using conditions \eqref{eq:f_cond} and \eqref{eq:g_cond}, we note that
\begin{align*}
\left\|\int_{0}^{t_f} d_g(x(t),x_0)u(t)\mathrm{d}t\right\|_{\infty} &\leq \int_{0}^{t_f} \left\|d_g(x(t),x_0)u(t)\right\|_{\infty} \mathrm{d}t \\
&\leq \int_{0}^{t_f} \|d_g(x(t),x_0)\|_{\infty} \mathrm{d}t \leq D_g \int_{0}^{t_f} \|x(t) - x_0\|_{\infty} \mathrm{d}t, \\
\text{and } \left\|\int_{0}^{t_f}d_f(x(t),x_0)\mathrm{d}t\right\|_{\infty} &\leq \int_{0}^{t_f} \|d_f(x(t),x_0)\|_{\infty} \mathrm{d}t \leq D_f \int_{0}^{t_f} \|x(t) - x_0\|_{\infty} \mathrm{d}t,
\end{align*}
where we use the sub-multiplicative property of matrix norms and the input constraint $\|u(t)\|_{\infty} \leq 1$ for all $t$ \eqref{eq:SetU}. Thus,
\begin{equation} \label{eq:v1}
\|v\|_{\infty} \leq t_f\|f_0\|_{\infty} + (D_f + D_g) \int_{0}^{t_f} \|x(t) - x_0\|_{\infty}\mathrm{d}t.
\end{equation}
We require a method to evaluate the second term in \eqref{eq:v1}. From \eqref{eq:Sol1},
$$
x(t)-x_0 = tf_0 + \int_{0}^{t} d_f(x(\tau),x_0)\mathrm{d}\tau + tB\overline{u} + \int_{0}^{t} d_g(x(\tau),x_0)u(\tau)\mathrm{d}\tau.
$$
Applying $\|\cdot\|_{\infty}$ on both sides and following steps similar to those used in obtaining \eqref{eq:v1}, we obtain
\begin{equation} \label{eq:x1}
\|x(t) - x_0\|_{\infty} \leq t(\|f_0\|_{\infty} + \|B\|_{\infty}) + (D_f + D_g)\int_{0}^{t} \|x(\tau) - x_0\|_{\infty}\mathrm{d}\tau.
\end{equation}
Define $Q(t) \coloneqq \int_{0}^{t}\|x(\tau) - x_0\|_{\infty}\mathrm{d}\tau$ and $c \coloneqq \|f_0\|_{\infty} + \|B\|_{\infty}$. Note that $Q(0) = 0$. In \eqref{eq:v1}, we require $Q(t_f)$ in the second term. From the first fundamental theorem of calculus \cite[Theorem 5.1]{Apostol}, $\dot{Q}(t) = \|x(t) - x_0\|_{\infty}$. Then, \eqref{eq:x1} reduces to
\begin{equation} \label{eq:Q1}
\dot{Q}(t) \leq ct + (D_f + D_g)Q(t), ~Q(0) = 0.
\end{equation}
Let $Q_1(t) \coloneqq ct + (D_f + D_g)Q(t), Q_1(0) = 0$ so that $\dot{Q}(t) \leq Q_1(t)$. Then, $\dot{Q}_1(t) = c + (D_f + D_g)\dot{Q}(t) \leq c + (D_f + D_g) Q_1(t)$. Next, let $Q_2(t) \coloneqq c + (D_f + D_g) Q_1(t), Q_2(0) = c$ so that $\dot{Q}_1(t) \leq Q_2(t)$. Then, $\dot{Q}_2(t) = (D_f + D_g)\dot{Q}_1(t) \leq (D_f + D_g)Q_2(t)$. We thus have the differential inequality
$$
\dot{Q}_2(t) \leq (D_f + D_g)Q_2(t), ~Q_2(0) = c.
$$
We now invoke Gr\"onwall's inequality \cite[Theorem 1.2.1]{BGP}, using which we obtain
$$
Q_2(t) \leq Q_2(0)e^{(D_f + D_g)t} = ce^{(D_f + D_g)t}.
$$
Substituting back in $Q_1(t)$ and $Q(t)$, we have
\begin{align*}
Q_1(t) &\leq \frac{c(e^{(D_f + D_g)t}-1)}{D_f + D_g} \\
\text{and } Q(t) &\leq \frac{c(e^{(D_f+D_g)t}-1-(D_f+D_g)t)}{(D_f + D_g)^2}.
\end{align*}
Thus,
\begin{equation} \label{eq:Qtf}
Q(t_f) \leq \frac{c\left(e^{t_f(D_f+D_g)}-1-t_f(D_f+D_g)\right)}{(D_f+D_g)^{2}}.
\end{equation}
Substituting the bound \eqref{eq:Qtf} on $Q(t_f) = \int_{0}^{t_f}\|x(t) - x_0\|_{\infty}\mathrm{d}t$ in the third term of \eqref{eq:v1},
$$
\|v\|_{\infty} \leq t_f\|f_0\|_{\infty} + \frac{c}{D_f+D_g} \left(e^{t_f(D_f+D_g)}-1-t_f(D_f+D_g)\right).
$$
Substituting $c = \|f_0\|_{\infty} + \|B\|_{\infty}$ and simplifying,
$$
\|v\|_{\infty} \leq \overline{v} \coloneqq \frac{1}{D_f+D_g} \left[c\left(e^{t_f(D_f + D_g)}-1\right)-t_f(D_f+D_g)\|B\|_{\infty}\right],
$$
thus concluding the proof. \hfill \qedsymbol

\section{Table of Non-Approximate and Approximate Expressions} \label{app:B}
Table \ref{tab:1} provides a classification of which equations are exact, i.e., non-approximate expressions or upper bounds, and which equations are based on the approximation from Lemma \ref{lem:approx}. We note that all expressions for linear driftless systems are non-approximate, and all expressions for nonlinear systems are approximate.

\begin{table}[!h]
\centering

\caption{Classification of Non-Approximate and Approximate Expressions}
\vspace{0.5em}
\begin{tabular}{c|c|c}
Dynamic Model & Classification & Equations \\ \toprule
\multirow{2}{*}{Linear Driftless System} & Non-Approximate Equalities & \eqref{eq:EN_Dft}, \eqref{eq:EM_Dft}, \eqref{eq:ETbar_Dft_1Act} \\ \cmidrule{2-3}
& Non-Approximate Upper Bounds & \eqref{eq:ETbar_Dft}, \eqref{eq:rA_bound} \\ \midrule
\multirow{2}{*}{Nonlinear System} & Approximate Equalities & \eqref{eq:EN}, \eqref{eq:EM}, \eqref{eq:ET_bar_1Act} \\ \cmidrule{2-3}
& Approximate Upper Bounds & \eqref{eq:ET_bar}, \eqref{eq:rA} \\ \bottomrule
\end{tabular}

\label{tab:1}

\end{table}

{ 
\bibliographystyle{IEEEtran}
\bibliography{references}
}

\end{document}